\newcommand\tp{\Tilde{P}}
\newcommand\JP{P^{\alpha,\beta}}
\newcommand\tmu{\Tilde{\mu}}
\theoremstyle{plain}
\newtheorem{theorem}{Theorem}[section]
\newtheorem{lemma}[theorem]{Lemma}
\theoremstyle{definition}
\newtheorem{definition}[theorem]{Definition}
\newtheorem*{assumption}{Assumption}
\theoremstyle{remark}
\newtheorem{remark}[theorem]{Remark}
\newtheorem{problem}[theorem]{Problem}
\newenvironment{sketch}{%
  \proof}{\endproof}
\definecolor{mydarkblue}{rgb}{0,0.08,0.45}
\def\xx{{\boldsymbol x}}
\def\yy{{\boldsymbol y}}
\def\XX{{\boldsymbol X}}
\def\yy{{\boldsymbol y}}
\def\UU{{\boldsymbol U}}
\def\HH{{\boldsymbol H}}
\def\dif{\mathop{}\!\mathrm{d}}
\def\RR{{\mathbb R}}
\def\EE{{\mathbb{E}}}
\DeclareMathOperator{\tr}{tr}
\def\defas{\stackrel{\text{def}}{=}}
\DeclareMathOperator*{\Span}{\mathbf{span}}
\newcommand*\mybluebox[1]{\colorbox{myblue}{\hspace{1em}#1\hspace{1em}}}
\DeclareMathOperator*{\diag}{\mathbf{diag}}
\definecolor{myblue}{HTML}{D2E4FC}
\definecolor{Gray}{gray}{0.92}
\DeclareDocumentCommand{\Prto} {o} {
  \IfNoValueTF {#1}
  {\overset{\Pr}{\longrightarrow}}
  { \xrightarrow[ #1 \to \infty]{\Pr }}
}
\icmltitlerunning{Only Tails Matter:
Average-Case Universality and Robustness in the Convex Regime}
\begin{document}

\twocolumn[
\icmltitle{Only Tails Matter:
Average-Case Universality and\\ Robustness in the Convex Regime}



\icmlsetsymbol{equal}{*}

\begin{icmlauthorlist}
\icmlauthor{Leonardo Cunha}{mila}
\icmlauthor{Gauthier Gidel}{mila,cifar}
\icmlauthor{Fabian Pedregosa}{google}
\icmlauthor{Damien Scieur}{sait}
\icmlauthor{Courtney Paquette}{mcgill}
\end{icmlauthorlist}
\icmlaffiliation{cifar}{Canada CIFAR AI Chair}
\icmlaffiliation{google}{Google Research}
\icmlaffiliation{mila}{MILA and DIRO, Université de Montreal,
  Montreal, Canada}
\icmlaffiliation{mcgill}{McGill University, Montreal, Canada}
\icmlaffiliation{sait}{Samsung SAIT AI Lab, Montreal, Canada}

\icmlcorrespondingauthor{Leonardo Cunha}{leonardocunha2107@gmail.com}

\icmlkeywords{Machine Learning, ICML}

\vskip 0.3in
]

\printAffiliationsAndNotice{} 
\begin{abstract}
    The recently developed average-case analysis of optimization methods allows a more fine-grained and representative convergence analysis than usual worst-case results. In exchange, this analysis requires a more precise hypothesis over the data generating process, namely assuming knowledge of the expected spectral distribution (ESD) of the random matrix associated with the problem. This work shows that the concentration of eigenvalues near the edges of the ESD determines a problem's asymptotic average complexity. This a priori information on this concentration is a more grounded assumption than complete knowledge of the ESD. This approximate concentration is effectively a middle ground between the coarseness of the worst-case scenario convergence and the restrictive previous average-case analysis. We also introduce the Generalized Chebyshev method, asymptotically optimal under a hypothesis on this concentration and globally optimal when the ESD follows a Beta distribution. We compare its performance to classical optimization algorithms, such as gradient descent or Nesterov's scheme, and we show that, in the average-case context, Nesterov's method is universally nearly optimal asymptotically.
\end{abstract}
\section{Introduction}

The analysis of the average complexity of algorithms has a long tradition in computer science. Average-case complexity, for instance, drives much of the decisions made in cryptography \citep{bogdanov2006average}.

Despite their relevance, average-case analyses are difficult to extend to other algorithms, partly because of the intrinsic issue of defining a typical distribution over problem instances. Recently though, \citet{pedregosa2020acceleration}  derived a framework to systemically evaluate the complexity of first-order methods when applied to distributions of quadratic minimization problems. This derivation is done by relating the average-case convergence rate to the \textit{expected spectral distribution} (ESD) of the objective function's Hessian, which is a well-studied object in random matrix theory. In practice, however, the knowledge of the ESD is a much stronger requirement than the worst-case analysis, which relies only on knowledge of the distribution's support. 

\citet{paquette2020halting}  extended the average-case framework by introducing a noisy generative model for the problems. Among other results, they derived the average complexity of the Nesterov Accelerated Method \citep{nesterov2003introductory} on a particular distribution. Moreover, they showed the concentration of convergence metrics in the infinite sample and dimensional limit. 

\citet{scieur2020universal} showed that for a strongly convex problem with eigenvalues supported on a contiguous interval, the optimal average-case complexity converges asymptotically to the one given by the Polyak Heavy Ball method \citep{Polyak1962Some} in the worst-case.

\begin{figure}[H]
\centering
\includegraphics[clip, trim=4cm 0cm 0cm 0cm,width = 0.8 \linewidth]{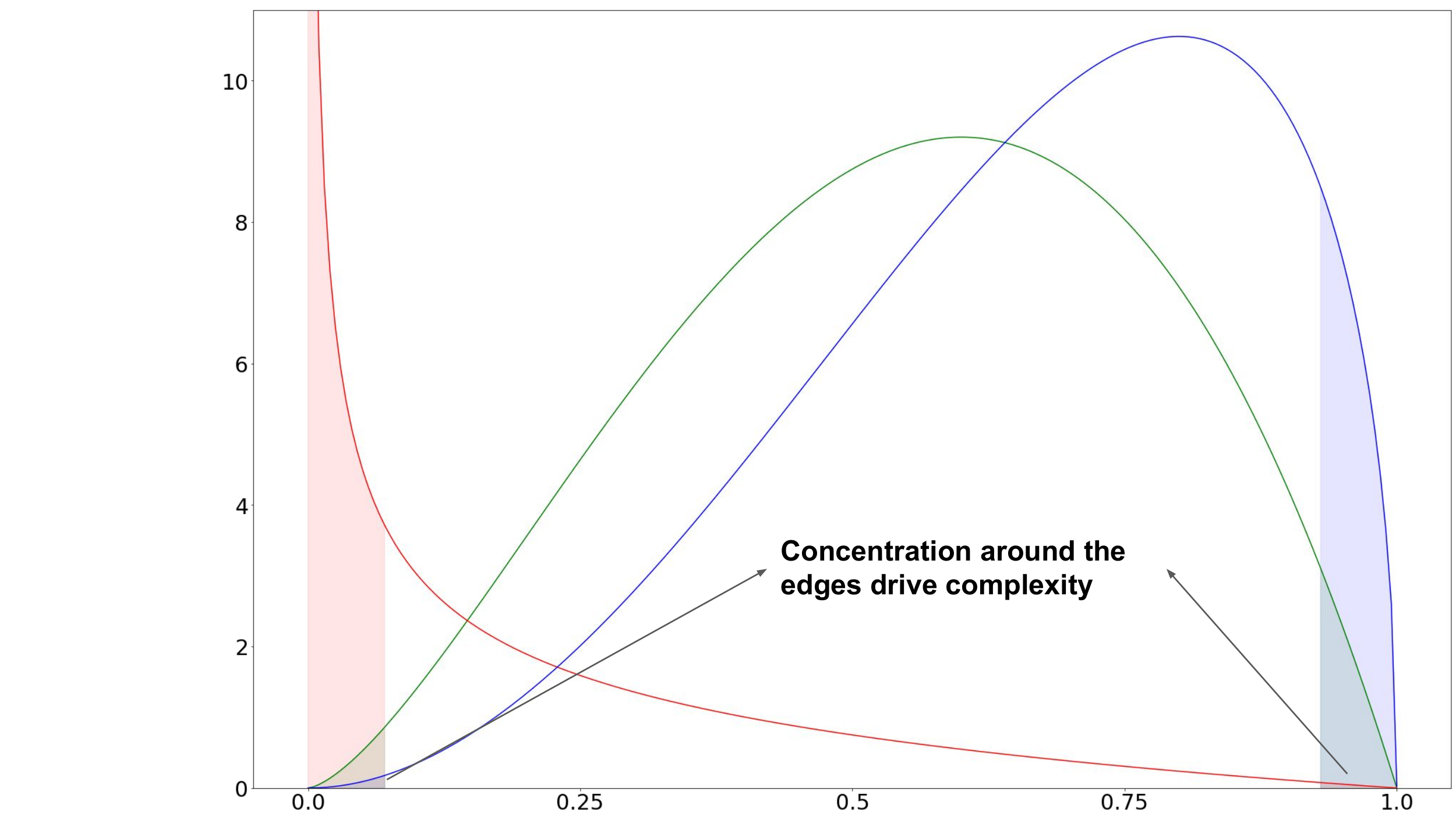}

\caption{
Representation of different spectra with different concentrations of eigenvalues around the support edges. These concentrations determine the average-case rates for non-strongly problems.}
\end{figure}

\subsection{Current limitations of the average-case analysis} 
This paper addresses some of the limitations in previous works on average-case analysis. First, little is known about the convergence rate on \textbf{convex} but not strongly-convex problems. Also, optimal average-case algorithms require an \textbf{exact estimation of the ESD} to guarantee an optimal convergence rate; otherwise, their convergence rate under inexact ESD is unknown. Finally, the \textbf{non-smooth} has not yet been discussed in its full generality.

\textbf{Convex problems.}  In the smooth and strongly convex case, the optimal worst-case and average-case convergence rates are asymptotically equal \citep{scieur2020universal}. However, little is known about optimal average-case rates for non-strongly convex problems, as well as the average-case complexity of classical methods such as gradient descent or Nesterov's method, see \citep{paquette2020halting}.

\textbf{Exact estimation of the ESD.} \citet{pedregosa2020acceleration} developed average-case optimal algorithms that required an exact estimation of the ESD of the problem class. Such an estimation may be complex, if not impossible, to obtain in practice. Despite showing good performance when the ESD is estimated with empirical quantities, there are no theoretical guarantees of the method's performance when the ESD is poorly estimated. Therefore, there is a need to analyze the algorithm's performance under different notions of uncertainty on the spectrum, allowing a practitioner to choose the best algorithm for a practical problem even with imperfect \textit{a priori} information.

\textbf{Non-smooth objectives.} In this paper we provide an average-case analysis for the generalized Laguerre distribution $\lambda^\alpha e^{-\lambda},\, \alpha >-1$. This is a distribution with unbounded support, i.e., when the largest eigenvalue (also known as smoothness constant) is bounded. This extends the results of 
\citet{pedregosa2020acceleration} for the Laguerre distribution $e^{-\lambda}$.

\subsection{Contributions}

Our main contribution is a fine-grained analysis of the average-case complexity on convex quadratic problems: we show that a problem's complexity depends on the concentration of the eigenvalues of ESD around the edges of their support. Using this analysis, we also derive a family of average-case \textbf{optimal}  algorithms, analyze their \textbf{robustness}, and finally exhibit a \textbf{universality} result for Nesterov's method.
\begin{itemize}[leftmargin=*]
\setlength\itemsep{0.1 cm}
    \item \textbf{Optimal algorithms.} In Section \ref{section: methods}, we propose the Generalized Chebyshev Method (GCM, Algorithm \ref{algo: chebyshev}), a family of algorithms whose parameters depend on the concentration of the ESD around the edges of their support. With the proper set of parameters, the GCM method  converges at an optimal average-case rate (Theorem \ref{thm: optimality} for smooth problems, Theorem \ref{thm:laguerrerates} for non-smooth problems). These rates are faster than optimal worst-case methods like Nesterov acceleration, and we recover the classical worst-case rates as limits of the average-case (see Table~\ref{table: rates}). Fig.~\ref{fig: last figure} shows our theoretical analysis to match the practical performance of the algorithms.
    \item \textbf{Robustness.} Developing an optimal algorithm requires knowledge of the exact ESD. However, in practical scenarios, we only have access to an \textit{approximation} of the ESD. In Theorem \ref{thm: jacobirates} in Section \ref{section: robust average}, we analyze the rate of GCM in the presence of such a mismatch. We also analyze the optimal average-case rates of distributions representing the smooth convex, non-smooth convex, and strongly convex settings and compare them with the worst-case rates (Table~\ref{table: rates}).
    \item \textbf{Universality.} Finally, in Theorem \ref{the: neste rates}, we analyze the asymptotic average-case convergence rate of Nesterov's method. We show that its convergence rate is optimal up to a logarithmic factor under some natural assumptions over the data, namely a concentration of eigenvalues around $0$ similar to the Marchenko-Pastur distribution. This observation contributes to the theoretical understanding of the empirical effectiveness of Nesterov's acceleration.
\end{itemize}

\begin{figure}[H]
\centering
\captionsetup{type=table}
    \bgroup
    \def\arraystretch{1.3}
        \small
        \begin{tabular}{@{}lcc@{}}\toprule
            \textbf{Regime} & \textbf{Worst-case} & \textbf{Average-Case} \\
            \midrule
            Strongly conv. & $\left(1-\Theta(\nicefrac{1}{\sqrt{\kappa}})\right)^t$ &
            $\left(1-\Theta(\nicefrac{1}{\sqrt{\kappa}})\right)^t$\\ 
            \hdashline
            Smooth conv. & $\nicefrac{1}{t^2}$ & $\nicefrac{1}{t^{2\xi+4}}$\\
            \hdashline
            Convex & $\nicefrac{1}{\sqrt{t}}$ & $\nicefrac{1}{t^{\alpha+2}}$\\
            \bottomrule
        \end{tabular}
        \label{tab:theoretic rates}
    \egroup
    \vspace{ 0.5 cm}
    \caption{Comparison between function value worst-case and average-case convergence. $\kappa$ is the condition number in the smooth strongly convex case. In the smooth convex case $\xi> -1$ is the concentration of eigenvalues around $0$  and in the non-smooth case we consider $\dif\mu\propto \lambda^\alpha e^{-\lambda} d\lambda $.}\label{table: rates}

\end{figure}
\section{Average-Case Analysis} \label{section: average case}

This section recalls the average-case analysis framework for random quadratic problems.
The main result is Theorem~\ref{thm: metrics}, which relates the expected error  to the \textit{expected spectral distribution} and the \textit{residual polynomial}. The one-to-one correspondence between the residual polynomials and first-order methods applied to quadratics will allow us to pose the problem of finding an optimal method as the best approximation problem in the space of polynomials.

We  define a \textbf{random} quadratic problem as follows:
\begin{problem}
Let $\HH \in \RR^{d \times d}$ be a random symmetric positive-definite matrix independent of $\xx^\star \in \RR^d$, which is a random vector and a solution to the problem. We define the random quadratic minimization problem as
\begin{empheq}[box=\mybluebox]{equation*}\tag{OPT}\label{eq:quad_optim}
  \vphantom{\sum_0^i}\min_{\xx \in \RR^d} \Big\{ f(\xx) \defas\!\mfrac{1}{2}(\xx\!-\!\xx^\star)^\top\!\HH(\xx\!-\!\xx^\star) \Big\}\,.
\end{empheq}
We are interested on minimizing the expected errors $\EE \|f(\xx_t) - f(\xx^\star)\|$, the expected function-value gap, and $\EE \|\nabla f(\xx_t)\|^2$, the expected gradient norm, where $\xx_t$ is the $t$-th update of a first-order method starting from $\xx_0$ and $\EE$ is the expectation over the random variables $\HH, \xx_0$ and $\xx^\star$. 
\end{problem}

Note that the expectations we consider throughout the paper are over problem instances, not over any randomness of the algorithm.

In this paper, we consider the class of \emph{first-order methods} (F.O.M's) to minimize \eqref{eq:quad_optim}. Methods in this class construct the iterates $\xx_t$ as
\begin{equation} \label{eq:first_order_methods}
    \xx_{t} \in \xx_0 + \Span\{ \nabla f(\xx_0), \ldots, \nabla f(\xx_{t-1})  \}\,,
\end{equation}
that is, $\xx_t$ belongs to the span of previous gradients. This class of algorithms includes for instance gradient descent and momentum, but not quasi-Newton methods since the preconditioner could allow the iterates to go outside of the span. Furthermore, we will only consider \emph{oblivious} methods, that is, methods in which the coefficients of the update are known in advance and do not depend on previous updates. This leaves out some methods such as conjugate gradient or methods with line-search.

\paragraph{From First-Order Methods to Polynomials.}
There is an intimate link between first-order methods and polynomials that simplifies the analysis of quadratic objectives. The following proposition shows that, with this link, we can assign to each optimization method a polynomial that determines its convergence.
We will denote $P_t(\lambda)$ and $P_t(\HH)$ for the polynomials taking arguments in $\RR$ and $\RR^{n\times n}$ respectively with the same coefficients. 
Following \cite{fischer1996polynomial}, we will say a polynomial $P_t$ is \textit{residual} if $P_t(0)=1$.\\

\begin{restatable}{proposition}{linkalgopolynomial}
    \label{prop:link_algo_polynomial} \citep{hestenes1952methods}
    Let $\xx_t$ be generated by a first-order method. Then there exists a residual polynomial $P_t$ of degree $t$, that verifies
    \begin{equation}\label{eq:polynomial_iterates}
        \vphantom{\sum^n}\xx_{t}-\xx^\star = P_t(\HH)(\xx_0-\xx^\star)~.
    \end{equation}
\end{restatable}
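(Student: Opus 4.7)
The plan is to prove the statement by induction on $t$, exploiting the simple closed form of the gradient on a quadratic: since $f(\xx)=\tfrac12(\xx-\xx^\star)^\top \HH (\xx-\xx^\star)$, one has $\nabla f(\xx)=\HH(\xx-\xx^\star)$, so every gradient along the trajectory can be rewritten as $\HH$ applied to the error vector $\xx_k-\xx^\star$.

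For the base case $t=0$, I would simply take $P_0 \equiv 1$, which is trivially residual ($P_0(0)=1$) and of degree $0$, and the identity $\xx_0-\xx^\star = P_0(\HH)(\xx_0-\xx^\star)$ is immediate.

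For the inductive step, suppose the claim holds for all $k<t$ with residual polynomials $P_k$ of degree $k$. By the first-order method assumption \eqref{eq:first_order_methods}, there exist scalars $c_0,\dots,c_{t-1}$ (determined by the oblivious update rule) such that
\begin{equation*}
\xx_t = \xx_0 + \sum_{k=0}^{t-1} c_k\, \nabla f(\xx_k).
\end{equation*}
Substituting $\nabla f(\xx_k) = \HH(\xx_k-\xx^\star) = \HH P_k(\HH)(\xx_0-\xx^\star)$ (by the inductive hypothesis) and subtracting $\xx^\star$ from both sides gives
\begin{equation*}
\xx_t-\xx^\star = \Bigl( I + \sum_{k=0}^{t-1} c_k\,\HH\, P_k(\HH)\Bigr)(\xx_0-\xx^\star),
\end{equation*}
so it suffices to define the polynomial $P_t(\lambda) \defas 1 + \sum_{k=0}^{t-1} c_k\,\lambda\, P_k(\lambda)$.

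Finally I would verify the two required properties of $P_t$: the degree bound follows because each $\lambda P_k(\lambda)$ has degree $k+1\le t$, so $\deg P_t \le t$; and the residual property $P_t(0)=1$ follows by evaluating at $\lambda=0$, where every summand in the sum vanishes because of the factor $\lambda$. There is essentially no real obstacle here: the only subtlety worth flagging is that the argument relies on $\HH$ commuting with any polynomial in itself (so that the reshuffling $\HH P_k(\HH) = P_k(\HH)\HH$ used implicitly when identifying scalar and matrix polynomials is valid), which is immediate since $\HH$ is symmetric and polynomials in a single matrix commute. The obliviousness assumption on the method ensures that the coefficients $c_k$ do not themselves depend on $\HH$, which is what allows them to be pulled out as scalars defining a genuine polynomial in $\lambda$.
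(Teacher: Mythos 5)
Your induction is correct and is the standard argument; the paper itself gives no proof of this proposition, simply citing \citet{hestenes1952methods}, and your construction $P_t(\lambda)=1+\sum_{k=0}^{t-1}c_k\lambda P_k(\lambda)$ is exactly the canonical way it is established. The only cosmetic remark is that you obtain $\deg P_t\le t$ rather than exactly $t$, which is the intended reading of the statement, and your closing observations (commutation of polynomials in $\HH$, obliviousness making the $c_k$ independent of $\HH$) are accurate and relevant for the later average-case use of the result.
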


\begin{remark} \label{rmk: momentum based}
If the first-order method  is further a \textbf{momentum method}, i.e.
$$
    \xx_{t+1}=\xx_t+h_t\nabla f(\xx_t)+m_t(\xx_t-\xx_{t-1}) ,
$$
we can determine the polynomials by the recurrence $P_0=1$ and
    \begin{equation*}
        P_{t+1}(\lambda)=P_t(\lambda)+h_t\lambda P_t(\lambda)+m_t(P_t(\lambda)-P_{t-1}(\lambda)) .
    \end{equation*}
We note that while most popular F.O.M's can be posed as a momentum method, the Nesterov method cannot.
\end{remark}

A convenient way to collect statistics on the spectrum of a matrix is through its \emph{empirical spectral distribution}.\\

\begin{definition}
[Expected spectral distribution (ESD)]. 
Let $\HH$ be a random matrix with eigenvalues $\{\lambda_1, \ldots, \lambda_d\}$. The \textbf{empirical spectral distribution} of $\HH$, called ${\mu}_{\HH}$, is the probability measure
\begin{equation}\label{eq:wighted_spectral_density}
    \mu_{\HH} \defas \frac{1}{d}{\textstyle{\sum_{i=1}^d}} \delta_{\lambda_i},
\end{equation}
where $\delta_{\lambda_i}$ is the Dirac delta, a distribution equal to zero everywhere except at $\lambda_i$ and whose integral over the entire real line is equal to one.

Since $\HH$ is random, the empirical spectral distribution $\mu_\HH$ is a  random variable in the space of measures. Its expectation over $\HH$ is called the \textbf{expected spectral distribution} (ESD) and we denote it
\begin{equation}
\mu \defas \EE_{\HH}[\mu_{\HH}]\,.
\end{equation}
\end{definition}

The following theorem links the ESD to the average-case convergence of a first-order method when $\xx_0-\xx^\star$ and $\HH$ are independent.

\begin{restatable}{theorem}{metrics} \label{thm: metrics}
Let $\xx_t$ be generated by a first-order method with associated residual polynomial $P_t$, $\mu$ be the ESD, and $\mathbb{E}[(\xx_0-\xx^\star)(\xx_0-\xx^\star)^\top]=R^2\textbf{I}$ for some constant $R$. Then we have the following identities for different convergence metrics:
\label{eq:error_norm_x}
\begin{align}
  &\mathbb{E}[\|\xx_t-\xx^\star\|^2] = { R^2} \int {P_t^2(\lambda) \dif\mu(\lambda)}\,,\\
    &\mathbb{E}[f(\xx_t)-f(\xx^\star)]=\frac{R^2}{2}\int P_t^2(\lambda)\lambda \dif\mu(\lambda)\,,\\
    &\mathbb{E}[\|\nabla f(\xx_t)\|^2_2]=R^2\int P_t^2(\lambda)\lambda^2\dif\mu(\lambda)\,.
\end{align}
\end{restatable}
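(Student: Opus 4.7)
The plan is to reduce each of the three quantities to a quadratic form in $\xx_0 - \xx^\star$ whose middle matrix is a polynomial in $\HH$, then separate the averages over $\HH$ and $\xx_0 - \xx^\star$ using the independence hypothesis, and finally turn the resulting trace into an integral against the ESD. The starting point is Proposition~\ref{prop:link_algo_polynomial}, which gives $\xx_t - \xx^\star = P_t(\HH)(\xx_0 - \xx^\star)$. Since $\nabla f(\xx) = \HH(\xx - \xx^\star)$ and $f(\xx) - f(\xx^\star) = \tfrac12 (\xx - \xx^\star)^\top \HH (\xx - \xx^\star)$, each of the three metrics can be written as $(\xx_0 - \xx^\star)^\top Q(\HH)(\xx_0 - \xx^\star)$, where $Q(\lambda)$ is respectively $P_t^2(\lambda)$, $\lambda P_t^2(\lambda)$, and $\lambda^2 P_t^2(\lambda)$. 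Note that because the method is oblivious, the coefficients of $P_t$ are deterministic, so $Q(\HH)$ is genuinely a matrix polynomial evaluated on the random $\HH$.

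Next, I would use the cyclic property of the trace to write the quadratic form as $\tr\bigl(Q(\HH)\,(\xx_0-\xx^\star)(\xx_0-\xx^\star)^\top\bigr)$. Taking expectation and conditioning first on $\HH$, the independence of $\HH$ from $\xx_0 - \xx^\star$ lets me pull the inner expectation inside the trace, so the factor $(\xx_0-\xx^\star)(\xx_0-\xx^\star)^\top$ is replaced by its mean $R^2\II$. This reduces the problem to computing $R^2\,\EE_\HH[\tr Q(\HH)]$. Since $\HH$ is symmetric, it admits an orthogonal eigendecomposition; $Q(\HH)$ shares the same eigenbasis and has eigenvalues $Q(\lambda_i)$, so $\tr Q(\HH) = \sum_{i=1}^d Q(\lambda_i) = \int Q(\lambda)\,\dif\mu_{\HH}(\lambda)$ (up to the normalization of $\mu_{\HH}$ fixed in the definition of the ESD). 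Taking the outer expectation over $\HH$ turns $\mu_{\HH}$ into $\mu$ by Fubini, and substituting the three choices of $Q$ yields the three stated identities.

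There is no conceptual obstacle here; the argument is essentially bookkeeping around the polynomial representation. The one point worth being explicit about is the independence assumption: it is needed both to decouple $\EE_{\xx_0,\xx^\star}$ from $\EE_\HH$ so that the middle factor collapses to $R^2\II$, and to guarantee that $P_t(\HH)$ is a well-defined deterministic function of $\HH$ rather than being coupled to $\xx_0 - \xx^\star$ (which would fail for non-oblivious methods such as conjugate gradient). Modulo this, the three formulas follow from a single template applied to three different choices of the weight $Q(\lambda)$.
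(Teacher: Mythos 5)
Your proof is correct and follows essentially the same route as the paper's: express each metric as a quadratic form $(\xx_0-\xx^\star)^\top Q(\HH)(\xx_0-\xx^\star)$, use the cyclic trace property and independence to collapse the outer factor to $R^2\II$, and convert $\EE_\HH[\tr Q(\HH)]$ into an integral against the ESD. Your explicit remark about the $1/d$ normalization of $\mu_{\HH}$ is in fact a point the paper glosses over, but otherwise the two arguments coincide.
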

This theorem states that polynomials are a powerful abstraction, allowing us to write all our convergence metrics within the same framework. Therefore, in the rest of the paper, we will refer directly to the polynomials associated with a given method. For simplicity, we set $R^2 = 1$.

This framework is linked to the field of \textbf{orthogonal polynomials} by the following proposition. We construct an optimal method w.r.t. a given distribution through a family of orthogonal polynomials.

\begin{restatable}{proposition}{optimality}
 \label{prop: optimality}
 Let $\nu$ be a distribution with continuous support and $P_t^l$ be defined as
 \begin{equation}
     P_t^l\defas{\arg \min}_{P_t(0)=1} \int P_t^2(\lambda) \lambda^l d\nu(\lambda)\,.
 \end{equation}
 Then $(P_t^l)_{t=0,1,\ldots}$ is the family of residual orthogonal polynomials w.r.t. to $\lambda^{l+1}d\nu$.
\end{restatable}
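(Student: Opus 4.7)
The plan is to treat the optimization $\min_{P_t(0)=1}\int P_t^2(\lambda)\,\lambda^l\,\dif\nu(\lambda)$ as a constrained quadratic program on the finite-dimensional space of polynomials of degree at most $t$, write down its first-order optimality conditions, and then recognize those conditions as exactly the orthogonality relations that define the residual orthogonal polynomials for the weighted measure $\lambda^{l+1}\dif\nu$.

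First I would set $\dif\omega(\lambda)\defas\lambda^l\dif\nu(\lambda)$. Because $\nu$ has continuous (non-atomic) support, $\langle P,Q\rangle_\omega\defas\int PQ\dif\omega$ is a genuine inner product on polynomials of degree $\le t$: any nonzero polynomial has only finitely many roots so $\int P^2\dif\omega>0$. The feasible set $\{P:\deg P\le t,\, P(0)=1\}$ is an affine hyperplane, the objective is a strictly convex quadratic in the coefficients of $P_t$, hence a unique minimizer $P_t^l$ exists.

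Next I would derive the first-order optimality condition. The tangent space at a feasible point consists of polynomials $Q$ of degree $\le t$ satisfying $Q(0)=0$. For any such $Q$ and any $\varepsilon\in\RR$, $P_t^l+\varepsilon Q$ remains feasible, so differentiating $\int(P_t^l+\varepsilon Q)^2\dif\omega$ at $\varepsilon=0$ yields $\int P_t^l(\lambda)\,Q(\lambda)\,\lambda^l\dif\nu(\lambda)=0$ for every admissible $Q$. Writing $Q(\lambda)=\lambda\tilde Q(\lambda)$ with $\deg\tilde Q\le t-1$, this condition becomes
\begin{equation*}
    \int P_t^l(\lambda)\,\tilde Q(\lambda)\,\lambda^{l+1}\dif\nu(\lambda)=0\quad\text{for every }\tilde Q\text{ with }\deg\tilde Q\le t-1,
\end{equation*}
which is precisely the orthogonality relation characterizing the degree-$t$ orthogonal polynomial for the positive measure $\lambda^{l+1}\dif\nu$.

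Finally, I would invoke uniqueness. Orthogonal polynomials with respect to a positive Borel measure are unique up to scalar multiplication, and the residual normalization $P_t^l(0)=1$ singles out a unique representative provided the abstract $t$-th orthogonal polynomial does not vanish at $0$. Since $\lambda^{l+1}\dif\nu$ is supported in $[0,\infty)$ with continuous support, the classical localization result for zeros of orthogonal polynomials places all roots strictly inside the convex hull of the support, hence strictly positive, and the rescaling to $P_t^l(0)=1$ is legitimate. Combined with the orthogonality condition, this identifies $P_t^l$ as the $t$-th residual orthogonal polynomial for $\lambda^{l+1}\dif\nu$. The only subtlety in the argument is this nonvanishing at $0$, which is where the continuous-support hypothesis plays its sole role; everything else is standard Hilbert-space projection onto the hyperplane $\{P(0)=1\}$.
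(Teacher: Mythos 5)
Your proposal is correct and follows essentially the same route as the paper: both derive the first-order optimality conditions of the constrained quadratic problem (you via perturbations $Q$ with $Q(0)=0$, the paper via a Lagrangian in the coefficients) and read them off as orthogonality to all polynomials of degree at most $t-1$ with respect to $\lambda^{l+1}\dif\nu$. Your added care about existence/uniqueness of the minimizer and the nonvanishing of the orthogonal polynomial at $0$ (so that the residual normalization is legitimate) fills in details the paper leaves implicit, but it is the same argument.
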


We refer to objective $l$ as the one associated to the added $\lambda^l$ term, i.e. the function-value is objective $l=1$. This proposition further implies that the optimal first-order method is a momentum method because Favard's theorem \cite{marcellan2001favard} tells us the orthogonal polynomials w.r.t. a given distribution are related through a \textbf{three term recurrence},

\begin{equation}
    P_{t+1}(\lambda)=(a_t + b_t\lambda) P_t(\lambda)+(1-a_t)P_{t-1}(\lambda)\,.
\end{equation}
Following Remark \ref{rmk: momentum based}, the optimal method is derived from this recurrence as
\begin{equation}
    \xx_{t+1}=\xx_t+(a_t-1)(\xx_t-\xx_{t-1})+b_t\nabla f(\xx_t)\,.
\end{equation}

\section{Methods} \label{section: methods}
Writing the rates in terms of the \textit{expected spectral distribution} ties the average-case framework to the field of \textit{random matrix theory}. A classical result of this field is that the same spectral distribution arises from different input distributions in the data. For example, the \textbf{Marchenko-Pastur} distribution arises in the large sample and dimension limit when we take the Gram matrix of a matrix where the entries are generated i.i.d. from any distribution with mean zero, variance $\sigma^2$ and with a bounded fourth moment.

\begin{definition}
The  Marchenko-Pastur distribution associated with the parameter  $r$ and with scale $\sigma^2$ is given by 
\begin{equation}
\dif\mu_{MP}(\lambda)=\frac{1}{2\pi\sigma^2}\frac{\sqrt{(\lambda^+-\lambda)(\lambda-\lambda^-)}}{r\lambda}\dif\lambda \,,
\end{equation}
with $\lambda^+=\sigma^2(1+\sqrt{r})^2$, $\lambda^-=\sigma^2\max(0,(1-\sqrt{r})^2)$.
\end{definition}
 $r$  in the Gram matrix case, is determined as $n/d$. The Marchenko-Pastur distribution $\mu_{MP}$ can be considered a natural first model for e.s.d's as it arises universally from matrices with i.i.d. entries,under mild low moment assumptions, there is no specific distribution of the matrix to be considered. It can be seen as a model for the  white-noise in the data. When $r=1$, i.e. $n=d$, we have $\dif\mu_{MP}\propto \lambda^{-1/2}\sqrt{\lambda^+-\lambda} \dif \lambda$. 

\citet{pedregosa2020acceleration} first derived the optimal method w.r.t. $\mu_{MP}$, and \citet{paquette2020halting} derived Nesterov's rates under the distribution. In this paper we take a more general view and consider Beta distribution.
\begin{definition}
    The (generalized) Beta distribution with parameters $\tau,\xi$ and scale $L$ are given by the (non-normalized) probability density function
    \begin{equation}
        \dif\mu_{\tau,\xi}(\lambda) \propto \lambda^\xi(L-\lambda)^\tau\dif\lambda\,,
    \end{equation}
\end{definition} 
where $\dif \lambda$ denotes the standard Lebesgues measure on $\mathbf{R}$.
This family of distributions generalizes the Marchenko-Pastur distribution, and both have similar concentrations near $0$ when $\xi\approx -1/2$.

The optimal method w.r.t. $\mu_{\tau,\xi}$ and objective $l$ is associated to a shifted Jacobi polynomial $\tp_t^{\alpha,\beta}$ with $\beta=\xi+l+1, \alpha=\tau$. This is a direct implication of the definition of the Jacobi polynomials as the orthogonal family w.r.t. the weights $(1-\lambda)^\alpha(1+\lambda)^\beta$. When $\alpha=\beta=-1/2$, we retrieve the \textit{Chebyshev Method} \citep{flanders1950numerical}. 
It is also related to the ``$\nu$-method'' of \citet{brakhage1987ill}, who considers the conjugate gradient algorithm applied to a matrix with a Gamma spectral distribution ($\beta=-1/2$).

We name the following method the \textit{Generalized Chebyshev Method} (GCM). 

\begin{algorithm}
\caption{Generalized Chebyshev Method GCM($\alpha,\beta$)}
\begin{algorithmic}
   
\STATE\textbf{Inputs}: Initial vector $\xx_0$, distribution parameters $\alpha, \beta$, $L = $largest eigenvalue of $\HH$\,.   \\

$\xx_{-1}\gets \xx_0,\delta_0\gets0$   \\
\FOR{$t=1,\ldots,T$}{
\STATE $a_t\gets-\frac{2\left(\beta^2+\alpha\beta+(2t+1)(\alpha+\beta)+2t^2+2\right)(2t+\alpha+\beta+1)}{
            2(t+1)(t+\alpha+\beta+1)(2t+\alpha+\beta)} $\\
\STATE $b_t\gets\frac{(2t+\alpha+\beta+1)(2t+\alpha+\beta+2)}{
            L(t+1)(t+\alpha+\beta+1)}$ \\
\STATE $\gamma_t\gets-\frac{(t+\alpha)(t+\beta)(2t+\alpha+\beta+2)}{
            (t+1)(t+\alpha+\beta+1)(2t+\alpha+\beta)}$\\
\STATE $\delta_t\gets\frac{1}{a_t+\gamma_t\delta_{t-1}}$\\
\STATE $\xx_t\gets\xx_{t-1}+(\delta_ta_t-1)(\xx_{t-1}-\xx_{t-2})+\delta_t b_t\nabla f(\xx_{t-1})$
}
\ENDFOR
\end{algorithmic}
\label{algo: chebyshev}
\end{algorithm}

We consider the Nesterov's method used in \cite{paquette2020halting}, which is defined by the iterations:
\begin{align}
    \xx_{t+1}&=\yy_t-\frac{1}{L}\nabla f(\yy_t), \\
    \yy_{t+1}&=\xx_{t+1}+\frac{t}{t+3}(\xx_{t+1}-\xx_t)\,.
\end{align}
We  also consider the Laguerre method \cite{pedregosa2020acceleration}, which is optimal w.r.t. $\dif\mu(\lambda)=\frac{\lambda^\alpha e^{-\lambda}}{\Gamma(\alpha+1)}\dif \lambda$, taking $\alpha$ as a parameter. This method is proposed to optimize non-smooth functions.

Both these methods are generalizations of the ones in \cite{pedregosa2020acceleration}. We show that Algorithm \ref{algo: chebyshev} corresponded to polynomials $\tp_t^{\alpha,\beta}$ and derive the Laguerre method in Appendix \ref{jacobi recurrence}.

\begin{remark}
The Generalized Chebyshev takes the largest eigenvalue $L$ as a parameter, but the rates we will show are robust to an \textit{overestimation} of $L$.
\end{remark}

\section{Robust Average-Case Rates} \label{section: robust average}
Throughout the rest of the paper, we make the following assumption about the spectral distributions:
\begin{assumption}
Let $\nu_{\tau,\xi}$ be a distribution supported in $(0,L]$ s.t. $\nu_{\tau,\xi}'(x)>0$ for $x\in [0,L]$, $d\nu_{\tau,\xi}=\Theta( \lambda^\xi)$ near $0$ and $d\nu_{\tau,\xi}=\Theta( (L-\lambda)^\tau)$ near $L$.
\end{assumption} 
We characterize our distributions of interest only in terms of the behavior at the edges. This assumption is sufficient to determine the asymptotic convergence of algorithms. This (mild) assumption excludes only distributions that decay exponentially near their edges, in which case they effectively behave as strongly convex problems. Moreover, this assumption allows considering a broader class of problems than previous work that assumed complete knowledge of the spectrum of the Hessian.

The $\xi$ parameter measures how close we are to the worst-case scenario as it approaches $-1$. Samples in finite dimension of distributions with high values of $\xi$ will work as strongly convex functions in practice, as samples of low eigenvalues are rarer and rarer with increasing $\xi$.

We show that the coefficients $\xi,\,\tau$ determine the asymptotics of the convergence of the methods: only the concentrations near the edge matter. We do this by singling out from each of these classes the beta distributions for which we can compute the rates, then show the rates to be the same for all distributions with the same concentrations.

\begin{restatable}[GCM average-case rates]{theorem}{robustjacobi}\label{thm: jacobirates}
The Generalized Chebyshev Method with parameters $(\alpha,\beta)$ applied to a problem with expected spectral distribution $\nu_{\tau,\xi}$ has average-case rates ${\mathbb{E}[f(\xx_t)-f(\xx^\star)]}\sim L\cdot C^{\alpha,\beta}_{1,\nu} t^{e_1}$ and  $\mathbb{E}[\|\nabla f(\xx_t)\|^2_2]\sim L^2\cdot C^{\alpha,\beta}_{2,\nu}t^{e_2} $, with the exponents 
\begin{align*}
e_1=&\left\{\begin{array}{ll}
    -1-2\beta \hspace{.3 cm}\mbox{if} \hspace{.3 cm}
		  \alpha<\tau+\tfrac12 \;\; \land \;\;  \beta <\xi+\tfrac32,&\\
		  -2(\xi+2)\log t \hspace{.3 cm}\mbox{if}\hspace{.3 cm} 
		  \alpha=\tau+\tfrac12 \;\;\land \;\; \beta =\xi+\tfrac32,&\\
		  2(\max\{\alpha-\beta-\tau,-\xi-1\}-1) \hspace{.3 cm}\mbox{otherwise},&
	\end{array}\right. \\
e_2=&\left\{\begin{array}{ll}
    -1-2\beta \hspace{.3 cm}\mbox{if} \hspace{.3 cm}
		  \alpha<\tau+\tfrac12 \;\;\land \;\; \beta <\xi+\tfrac52,&\\
		  -2(\xi+3)\log t \hspace{.3 cm}\mbox{if}\hspace{.3 cm} 
		  \alpha=\tau+\tfrac12 \;\;\land \;\; \beta =\xi+\tfrac52,&\\
		  2(\max\{\alpha-\beta-\tau,-\xi-2\}-1) \hspace{.3 cm}\mbox{otherwise}\,,&
	\end{array}\right.
\end{align*}
where $\land$ denotes the logical ``\emph{and}'' operator and $C^{\alpha,\beta}_{.,\nu}$ are distribution dependent constants.
\end{restatable}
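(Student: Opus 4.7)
The plan is to reduce both convergence metrics to integrals of squared residual Jacobi polynomials against the weighted measure $\lambda^{l}\,d\nu_{\tau,\xi}$ with $l\in\{1,2\}$, and to extract the asymptotic exponent by combining Mehler--Heine asymptotics at the endpoints of the support with Darboux asymptotics in the bulk. Throughout, recall from Section~\ref{section: methods} that $\tp_t^{\alpha,\beta}$ is the residual shifted Jacobi polynomial orthogonal on $[0,L]$ with respect to $\lambda^{\beta}(L-\lambda)^{\alpha}\,d\lambda$, normalized so that $\tp_t^{\alpha,\beta}(0)=1$.

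First, I would apply Theorem~\ref{thm: metrics} to write $\EE[f(\xx_t)-f(\xx^\star)] = \tfrac12\int_0^L(\tp_t^{\alpha,\beta})^2\lambda\,d\nu_{\tau,\xi}$ and $\EE\|\nabla f(\xx_t)\|^2 = \int_0^L(\tp_t^{\alpha,\beta})^2\lambda^2\,d\nu_{\tau,\xi}$, treating both cases uniformly in $l$. Then I would invoke the classical Mehler--Heine formula near each endpoint and the Darboux asymptotic in the strict interior. After the substitution $\lambda\approx L\theta^2/(4t^2)$ near $\lambda=0$, one obtains $\tp_t^{\alpha,\beta}(\lambda)\asymp \Gamma(\beta{+}1)(\theta/2)^{-\beta}J_\beta(\theta)$; near $\lambda=L$ the polynomial develops a peak of height $\sim t^{\alpha-\beta}$ on a region of width $\sim 1/t^2$ governed by $J_\alpha$; in the bulk the amplitude of $\tp_t^{\alpha,\beta}$ averages to $O(t^{-\beta-1/2})$.

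The third step is to decompose the domain into a $c/t^2$-neighborhood of $0$, a strict bulk region, and a $c/t^2$-neighborhood of $L$, and to estimate each piece using the hypothesis that $d\nu_{\tau,\xi}\sim\lambda^{\xi}\,d\lambda$ near $0$ and $\sim(L-\lambda)^{\tau}\,d\lambda$ near $L$. After changing to the Mehler--Heine scaling, three candidate contributions emerge: a bulk term of order $t^{-1-2\beta}$, a near-zero term of order $t^{-2(l+\xi+1)}\int_0^\infty\theta^{2(l+\xi)-2\beta+1}J_\beta(\theta)^2\,d\theta$ (finite precisely when $\beta>l+\xi+\tfrac12$), and a near-$L$ term of order $t^{2(\alpha-\beta-\tau-1)}\int_0^\infty\tilde\theta^{2\tau-2\alpha+1}J_\alpha(\tilde\theta)^2\,d\tilde\theta$ (finite when $\alpha>\tau+\tfrac12$). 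Taking the maximum of the three reproduces the three cases in the statement, while the critical equalities $\alpha=\tau+\tfrac12$ and $\beta=\xi+l+\tfrac12+1$ yield a logarithmic correction from the marginal divergence of the Bessel integral; specializing to $l=1$ and $l=2$ gives the two exponents $e_1,e_2$.

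Finally, each contribution depends on $\nu_{\tau,\xi}$ only through the exponents $(\xi,\tau)$ and a bounded positive bulk density, so the exponents $e_l$ are universal across distributions with the same edge behavior and all remaining details are absorbed into the constants $C^{\alpha,\beta}_{l,\nu}$. The hard part will be Step~3: matching the two local Mehler--Heine descriptions with the Darboux bulk description in overlap regions of size $1/t$ without double counting, and carefully tracking the logarithmic factor in the boundary regime where one of the Bessel integrals just fails to converge.
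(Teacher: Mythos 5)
Your proposal is correct in substance but follows a genuinely different route from the paper. The paper does not use pointwise Mehler--Heine/Darboux asymptotics at all: it quotes Szeg\H{o}'s ready-made integral estimate (Exercise 91, the generalization of 7.34.1) giving the order of $\int_0^1(1-x)^\tau \tp_t^{\alpha,\beta}(x)^2\,dx$ in the three regimes $\alpha\gtrless\tau+\tfrac12$, splits the integral over $[-1,1]$ into the two edge contributions via the symmetry $\tp_t^{\alpha,\beta}(x)=(-1)^t\tp_t^{\beta,\alpha}(-x)$, converts to the residual normalization through the extra factor $\Theta(t^{-2\beta})$, and then transfers the result from the Beta weight to a general $\nu_{\tau,\xi}$ by van Assche's weak-convergence lemma (Lemma \ref{wk}), which shows the bulk contribution of the orthonormal polynomials is $\Theta(t^{-1})$ so that the integral concentrates where $\nu$ is comparable to the Beta weight. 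Your three-region decomposition ($O(t^{-2})$-neighborhoods of $0$ and $L$ plus bulk) with explicit Bessel integrals effectively re-derives Szeg\H{o}'s lemma instead of citing it, and replaces the weak-convergence argument by the observation that each regional estimate only sees the edge exponents and a bounded bulk density; this buys more explicit constants (the Bessel integrals) at the price of the matching problem you correctly identify as the hard step: the Mehler--Heine formula is only valid for the scaled variable in compact sets, so to evaluate your near-edge integrals over $[0,\infty)$ you need the uniform Hilb-type asymptotics (Szeg\H{o}, Theorem 8.21.12) or the uniform interior bound of Theorem 7.32.2, exactly the inputs the paper's cited lemma packages for you. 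Two small corrections: the critical equality for the logarithm at the zero edge should be $\beta=\xi+l+\tfrac12$ (i.e.\ $\beta=\xi+\tfrac32$ for $e_1$ and $\beta=\xi+\tfrac52$ for $e_2$), consistent with your own finiteness condition $\beta>l+\xi+\tfrac12$, not $\beta=\xi+l+\tfrac32$ as written; and for a general $\nu_{\tau,\xi}$ you should state explicitly that the assumption gives a density bounded above and below on compact subsets of $(0,L)$, which is what lets you treat the bulk term uniformly in $\nu$.
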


\begin{sketch}
We first compute the convergence rates assuming that the ESD is a generalized Beta distribution with parameters $\tau, \xi$. For this we use Theorem \ref{thm: metrics} and  Lemma \ref{assymp. lemma}. For a general $\nu_{\tau,\xi}$, with the aid of Lemma \ref{wk}, we then show that the mass away from the edges is negligible, i.e.,
\begin{equation}
    \int_\epsilon^{L-\epsilon}P_t^{\alpha,\beta}(\lambda)^2\lambda^ld\nu_{\tau,\xi}\,,
\end{equation}
is $O(t^{-1-2\beta})=O(t^{e_.}), \forall \epsilon >0$. As the mass near the edges must be similar to the one for the Beta weights, the result follows.
\end{sketch}

Theorem \ref{thm: jacobirates}, illustrated by Fig. 2, shows that overestimating $\beta$ and underestimating $\alpha$ will still leave us with the optimal asymptotic rates. So a good rule of thumb for calibrating the algorithm is to use high $\beta$ and low $\alpha$.

Theorem \ref{thm: optimality} shows that a proper choice of $\alpha,\beta$ can indeed make the Jacobi polynomial asymptotically optimal w.r.t. to any $\nu_{\tau,\xi}$. 

\begin{restatable}[Optimal Rates]{theorem}{jacoptimal}\label{thm: optimality}
Consider the distribution $\nu_{\tau,\xi}$.
The optimal asymptotic average-case rates for $\mathbb{E}[f(\xx_t)-f(\xx^\star)]$ and $\mathbb{E}[\|\nabla f(\xx_t)\|^2_2]$ are attained by the GCM with parameters $(\tau,\xi+2)$ and $(\tau,\xi+3)$, respectively, and read
\begin{align}
    \mathbb{E}[f(\xx_t)-f(\xx^\star)] = \Theta(t^{-2(\xi+2)})\,, \\
    \mathbb{E}[\|\nabla f(\xx_t)\|^2_2] = \Theta(t^{-2(\xi+3)})\,.
\end{align}

\end{restatable}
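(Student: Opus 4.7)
The plan is to prove optimality in two steps: exhibit an algorithm attaining the stated rates via Theorem \ref{thm: jacobirates}, then match these with a lower bound using the extremal characterization from Proposition \ref{prop: optimality} combined with classical Christoffel-function asymptotics.

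For the upper bound I simply substitute the proposed parameters into the formula for $e_1$ in Theorem \ref{thm: jacobirates}. Taking $(\alpha,\beta) = (\tau,\xi+2)$, the strict inequality $\beta < \xi+\tfrac32$ of the first branch fails (since $\xi+2>\xi+\tfrac32$) and the equality conditions of the second branch fail, so we land in the ``otherwise'' branch. There $\alpha-\beta-\tau = -\xi-2$ while $-\xi-1 > -\xi-2$, hence $\max\{\alpha-\beta-\tau,-\xi-1\} = -\xi-1$ and $e_1 = 2(-\xi-1-1) = -2(\xi+2)$. The analogous substitution $(\tau,\xi+3)$ for gradient norm falls in the same branch and produces $\alpha-\beta-\tau = -\xi-3$, $\max\{-\xi-3,-\xi-2\}=-\xi-2$, and $e_2 = -2(\xi+3)$.

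For the lower bound, set $\sigma_l \defas \lambda^l \dif\nu_{\tau,\xi}$, so that Theorem \ref{thm: metrics} writes the expected error as $\int P_t^2 \dif\sigma_l$. By the standard extremal characterization underlying Proposition \ref{prop: optimality}, the minimum of $\int P^2 \dif\sigma_l$ over residual polynomials of degree at most $t$ equals $1/K_t^{\sigma_l}(0,0)$, where $K_t^{\sigma_l}$ is the Christoffel--Darboux kernel of $\sigma_l$. Under the standing assumption, the density of $\sigma_l$ behaves like $\lambda^{\xi+l}$ near $0$ and $(L-\lambda)^\tau$ near $L$, placing $\sigma_l$ in the class of generalized Jacobi weights. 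Classical Nevai-type asymptotics at an endpoint yield $K_t^{\sigma_l}(0,0) = \Theta(t^{2(\xi+l)+2}) = \Theta(t^{2(\xi+l+1)})$, giving the optimal value $\Theta(t^{-2(\xi+l+1)})$. For $l=1$ this is $\Theta(t^{-2(\xi+2)})$ and for $l=2$ it is $\Theta(t^{-2(\xi+3)})$, matching the upper bounds.

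The main obstacle is justifying the Christoffel-function asymptotic from the rather weak hypothesis on $\nu_{\tau,\xi}$ rather than invoking the full machinery of regular orthogonal polynomials. A self-contained reduction works as follows: pick $\epsilon>0$ such that $\dif\nu_{\tau,\xi} \geq c\, \lambda^\xi \dif\lambda$ on $[0,\epsilon]$; then for every residual polynomial $P_t$,
\begin{equation*}
\int_0^L P_t^2(\lambda)\, \lambda^l\, \dif\nu_{\tau,\xi}(\lambda) \;\geq\; c \int_0^\epsilon P_t^2(\lambda)\, \lambda^{\xi+l}\, \dif\lambda,
\end{equation*}
and the right-hand minimization is an explicit shifted-Jacobi extremal problem on $[0,\epsilon]$ whose value is $\Theta(t^{-2(\xi+l+1)})$ by the known Christoffel numbers of Jacobi polynomials at an endpoint (via scaling of the standard Chebyshev-type estimate). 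This monotonicity step is the delicate piece and is what turns the lower bound into a purely classical computation, matching the upper bound and completing the proof.
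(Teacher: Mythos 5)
Your proposal is correct, and its lower bound takes a genuinely different route from the paper. The upper bound is the same in both: plug $(\alpha,\beta)=(\tau,\xi+2)$ (resp.\ $(\tau,\xi+3)$) into Theorem \ref{thm: jacobirates} and land in the ``otherwise'' branch, giving $e_1=-2(\xi+2)$ and $e_2=-2(\xi+3)$; your case checking is right. For the lower bound, the paper takes the residual orthogonal polynomials $P_t^\nu$ for $\nu_{\tau,\xi}$ itself, uses the weak-convergence lemma of van Assche (Lemma \ref{wk}) together with the $\Theta$-comparison of the densities near the two edges to show $P_t^\nu$ performs on the Beta weight $\mu_{\tau,\xi}$ at the same order as on $\nu$, and concludes that the optimum for $\nu$ cannot beat the explicit Jacobi optimum for $\mu_{\tau,\xi}$. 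You instead localize: drop the mass on $[\epsilon,L]$, lower-bound the density by $c\,\lambda^{\xi}$ on $[0,\epsilon]$, and reduce to the Christoffel function of an explicit Jacobi weight at its hard edge, where $K_t(0,0)=\Theta(t^{2(\xi+l)+2})$ follows from the classical Szeg\H{o} formulas the paper already uses ($P_t^{\alpha,\beta}(-1)\asymp t^{\beta}$ and $h_t\asymp t^{-1}$). This is more elementary and arguably more robust: it needs only a one-sided density bound near $0$ and nothing at all about the right edge or the interior, whereas the paper's argument additionally reveals that the $\nu$-optimal polynomials mimic Jacobi behaviour at both edges. One caveat in your write-up: the first invocation of ``Nevai-type asymptotics'' for the generalized Jacobi class of $\sigma_l$ is not justified under the paper's weak standing assumption (which only prescribes $\Theta$-behaviour at the edges), but you correctly flag this and your monotonicity reduction bypasses it entirely, so the proof stands without it; I would simply delete that sentence and keep only the reduction.
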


\begin{sketch}
When $\nu_{\tau,\xi}$ is a Beta distribution, the result follows from Theorem \ref{thm: jacobirates}. 

For a general $\nu_{\tau,\xi}$, we consider the optimal method for this expected spectral distribution. We show that its performance on the corresponding Beta weight $\mu_{\tau,\xi}$ is similar to that of $\nu_{\tau,\xi}$, as the integrals concentrate on the edges. It follows that the optimal performance for $\nu_{\tau,\xi}$ is asymptotically the same as for $\mu_{\tau,\xi}$.
\end{sketch}
\begin{figure}[t]
    \centering
    \includegraphics[width=0.65\linewidth]{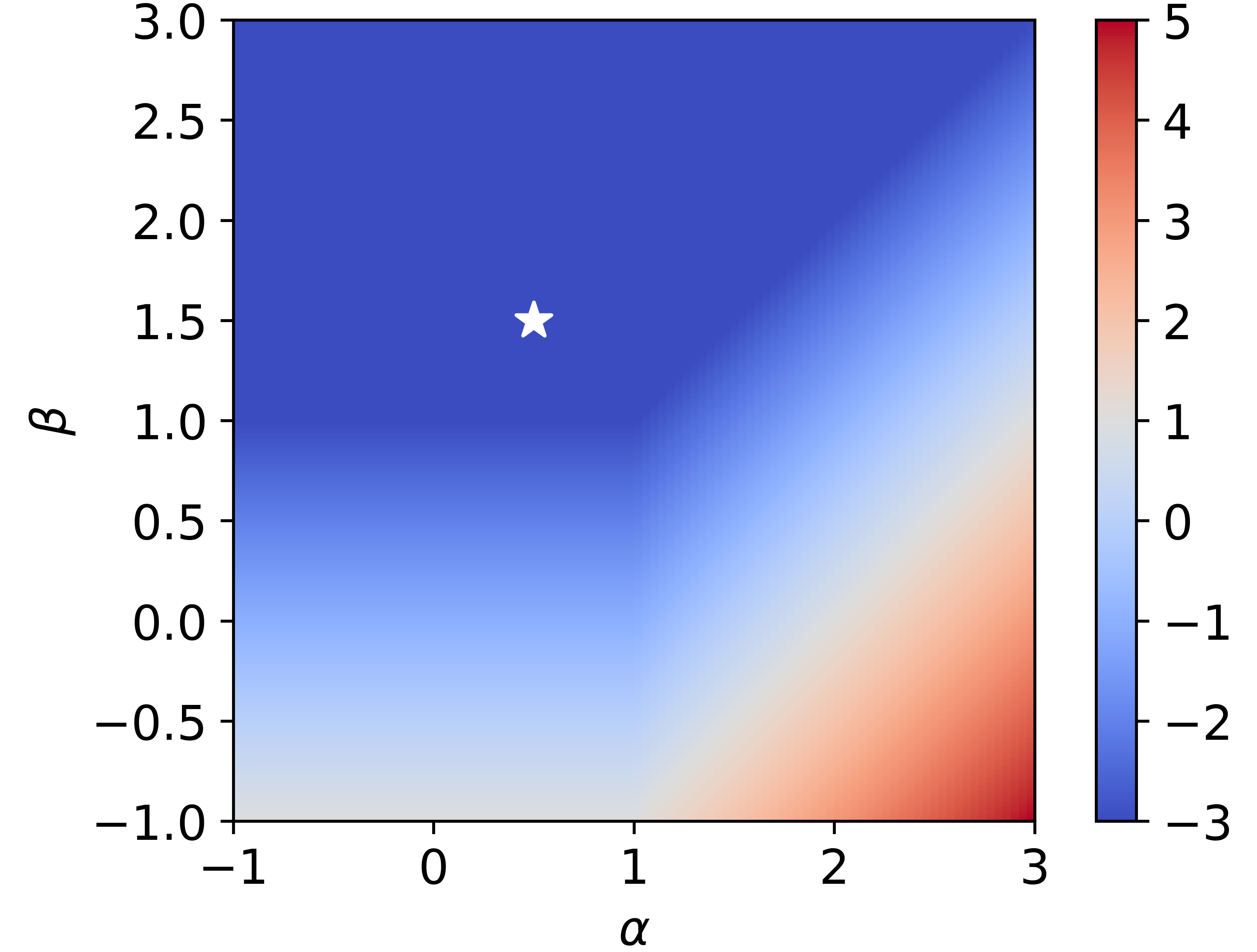}
    \caption{ The figure illustrates  the robustness of the Generalized Chebyshev Method with parameters $(\alpha,\beta)$ for a \emph{fixed problem} corresponding to the Marchenko-Pastur distribution ${(\tau=\tfrac12,\xi=-\tfrac12)}$. The 
    color represents the exponent $a$ of the average-case rate $O(t^{a})$ of the method for different values of $\alpha$ and $\beta$. The white star represents the optimal tuning and the blue area is the set of parameters for which the method converges. Note that a large region guarantees the same optimal asymptotic rate.}
\end{figure}
\begin{table}
    \centering

        \begin{tabular}{@{}lcc@{}}\toprule
            \multirow{2}{*}{\textbf{Method}$\qquad$} & \multicolumn{2}{c}{\textbf{Parameters} $(\tau,\xi)$} \\
            \cmidrule(lr){2-3}
             & $(\frac{1}{2},\frac{1}{2})$ & $(\frac{1}{2},-\frac{1}{2})$\\
            \cmidrule(lr){1-3}
            GCM$(\alpha=\frac{1}{2},\beta=\frac{5}{2})$ & $t^{-5}$ & $t^{-3}$ \\\hdashline
            GCM$(\alpha=\frac{1}{2},\beta=\frac{3}{2})$ & $t^{-4}$ & $t^{-3}$ \\\hdashline
            Nesterov & $t^{-4}$ &$t^{-3}\log t$\\\hdashline
            Gradient descent & $t^\frac{-5}{2}$ & $t^\frac{-3}{2}$ \\
            \bottomrule
        \end{tabular}
    \caption{ The table compares the  asymptotic average-case rates for the function-value for different methods and pairs $(\tau,\xi)$.}   \label{tab:fom rates}
  \end{table}

For the function value ($l=1$), we find rates that approach $t^{-2}$ as $\xi\rightarrow -1$, showing the worst-case as a limit (over the considered distribution) on the average-case.

We remark that the above theorems imply that, at least asymptotically, the GCM  is robust for a suboptimal choice of parameter $\beta$ up to $1/2$ below the optimal choice and infinitely above. 

For completeness, we derive worst-case rates for the GCM.
\begin{restatable}[GCM worst-case rates]{proposition}{worstcase}
Let $f$ be a convex, L-smooth quadratic function. Then, for the Generalized Chebyshev Method with parameters $(\alpha,\beta)$, we have worst-case rates $f(\xx_t)-f(\xx^\star) \leq C_1Lt^{e_3}$ and  $\|\nabla f(\xx_t)-f(\xx^\star)\|\leq C_2L^2 t^{e_4}$, with the exponents
\begin{align}
e_3=&\left\{\begin{array}{ll}
    2(\alpha-\beta) \hspace{.3 cm}\mbox{if} \hspace{.3 cm}
		  \alpha>\beta-1,&\\
		  -1-2\beta \hspace{.3 cm}\mbox{if}\hspace{.3 cm} 
		  \alpha\leq\beta-1 \land \beta \leq 1/2,&\\
		  -2\hspace{.3 cm}\mbox{otherwise},&
	\end{array}\right. \\
e_4=&\left\{\begin{array}{ll}
    2(\alpha-\beta) \hspace{.3 cm}\mbox{if} \hspace{.3 cm}
		  \alpha>\beta-2,&\\
		  -1-2\beta \hspace{.3 cm}\mbox{if}\hspace{.3 cm} 
		  \alpha\leq\beta-2 \land \beta \leq 3/2,&\\
		  -4\hspace{.3 cm}\mbox{otherwise}.&
	\end{array}\right.
 \end{align}
The logical and is noted $\land$.
\end{restatable}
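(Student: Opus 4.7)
The worst-case bound follows from Theorem~\ref{thm: metrics} by noting that the supremum of the relevant integral over probability measures supported on $[0,L]$ is attained at a Dirac mass, so that
\begin{equation*}
    \sup_{\mu} \mathbb{E}[f(\xx_t)-f(\xx^\star)] \,=\, \tfrac12 \sup_{\lambda\in[0,L]} \lambda \,\tp_t^{\alpha,\beta}(\lambda)^2,
\end{equation*}
and analogously with $\lambda^2$ for the gradient norm. Writing $\tp_t^{\alpha,\beta}(\lambda) = P_t^{\alpha,\beta}(2\lambda/L - 1)/P_t^{\alpha,\beta}(-1)$ in terms of the standard Jacobi polynomial on $[-1,1]$, this reduces the proposition to estimating the sup-norm of a weighted shifted Jacobi polynomial.

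The plan is then a three-regime analysis of $\lambda^l\tp_t^{\alpha,\beta}(\lambda)^2$ on $[0,L]$. First, at the right endpoint $\lambda=L$ (equivalently $x=1$) the explicit formulas $P_t^{\alpha,\beta}(1)\sim t^\alpha$ and $P_t^{\alpha,\beta}(-1)\sim (-1)^t t^\beta$ give a contribution of order $L^l\, t^{2(\alpha-\beta)}$. Second, for $\lambda$ in compact subsets of $(0,L)$ the classical Szeg\H{o} estimate
\begin{equation*}
    (1-x)^{\alpha/2+1/4}(1+x)^{\beta/2+1/4}\,|P_t^{\alpha,\beta}(x)| \leq C\, t^{-1/2}
\end{equation*}
yields, after dividing by $P_t^{\alpha,\beta}(-1)$, a bound of the form $\lambda^l \tp_t^{\alpha,\beta}(\lambda)^2 \lesssim t^{-1-2\beta}\lambda^{l-\beta-1/2}(L-\lambda)^{-\alpha-1/2}$, whose maximum over the interior controls that regime. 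Third, in a shrinking neighbourhood of $\lambda=0$, a Mehler--Heine type asymptotic describes the Jacobi polynomial in terms of a Bessel function; combined with the $\lambda^l$ prefactor this produces an envelope of order $t^{-2l}$ at the oscillation maxima.

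Combining the three estimates, I would match them against the three candidate exponents: right endpoint $t^{2(\alpha-\beta)}$, interior $t^{-1-2\beta}$, and near-zero $t^{-2l}$. Pairwise comparison gives the thresholds $\alpha = \beta - l$ (endpoint versus near-zero) and $\beta = l - 1/2$ (interior versus near-zero), which exactly reproduce the case splits in the statement for $l=1$ and $l=2$. The matching lower bounds, and hence the constants $C_1,C_2$, come from evaluating at the extremizer inside each regime. The main obstacle will be the near-zero regime that yields Case~3: Szeg\H{o}'s pointwise estimate loses control as $\lambda\to 0$ precisely where $\lambda^l$ could in principle be beaten by the growth of the polynomial, so the sharp $t^{-2l}$ behaviour has to be extracted from a Mehler--Heine expansion rather than from Szeg\H{o}'s bound; this is where the exponents $-2$ in $e_3$ and $-4$ in $e_4$ originate.
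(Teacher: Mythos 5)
Your plan is correct and follows essentially the same route as the paper: reduce the worst case to $\sup_{\lambda\in[0,L]}\lambda^l P_t^{\alpha,\beta}(\lambda)^2$ and bound the shifted, residual-normalized Jacobi polynomial in three regimes (endpoint values $\Theta(t^\alpha)/\Theta(t^\beta)$, interior Szeg\H{o} bound $\theta^{-\alpha-1/2}O(t^{-1/2})$, and the $O(1/t)$-neighbourhood of $\lambda=0$), which yields exactly the thresholds $\alpha=\beta-l$ and $\beta=l-\tfrac12$. The only cosmetic difference is that you invoke a Mehler--Heine/Bessel expansion near $\lambda=0$, while the paper gets the same $t^{-2l}$ envelope from the second case of Szeg\H{o}'s Theorem 7.32.2 together with the symmetry $\tp_t^{\alpha,\beta}(x)=(-1)^t\tp_t^{\beta,\alpha}(-x)$.
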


For common choice of $\alpha,\beta$, i.e. $\beta\geq \frac{1}{2}$, $\alpha\leq \beta-1$, the rate of decay of the function suboptimality achieves the theoretical lower bound of $t^{-2}$.

We now analyze the convergence of the Nesterov method. \cite{nesterov2003introductory} has shown that it matches up to a  constant factor a lower bound on the worst-case complexity of non strongly convex problems. A natural question is if this performance would translate to good average-case rates. To do so, we extend the proof of  \citet[
Lemma B.2]{paquette2020halting} for the Nesterov method under the Marchenko-Pastur distribution.  
\begin{restatable}[Nesterov average-case rates]{theorem}{nesterovrates} \label{the: neste rates}
Consider the distribution $\nu_{\tau,\xi}$. Then for the Nesterov method, we have average-case rates
\begin{align}
    &\mathbb{E}[f(\xx_t)-f(\xx^\star)]\sim C'_{1,\nu}
    {\Bigg\{}\begin{array}{ll}
		  t^{-2(\xi+2)}& \mbox{if } 
		  \xi<-1/2,\\
		  t^{-3}\log t& \mbox{if } 
		  \xi=-1/2,\\
		  t^{-(\xi+7/2)}& \mbox{if } 
		  \xi>-1/2,
	\end{array} \\
	&\mathbb{E}[\|\nabla f(\xx_t)\|^2_2] \sim C'_{2,\nu}
		  t^{-(\xi+9/2)}.
\end{align}
\end{restatable}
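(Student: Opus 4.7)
\begin{sketch}
By Theorem~\ref{thm: metrics}, the task reduces to estimating
$$I_l(t) \defas \int_0^L P_t^N(\lambda)^2\lambda^l\dif\nu_{\tau,\xi}(\lambda),\quad l\in\{1,2\},$$
where $P_t^N$ is the residual polynomial of Nesterov's method, characterized by $P_0^N=1$ and the recurrence
$$P_{t+1}^N(\lambda) = \left(1-\tfrac{\lambda}{L}\right)\!\left[\tfrac{2t+1}{t+2}P_t^N(\lambda) - \tfrac{t-1}{t+2}P_{t-1}^N(\lambda)\right]$$
obtained by translating the Nesterov iteration into residuals. The plan is to extend \citet[Lemma~B.2]{paquette2020halting}, which handles $(\tau,\xi)=(\tfrac12,-\tfrac12)$, to arbitrary $\nu_{\tau,\xi}$.

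I first produce a sharp large-$t$ asymptotic for $P_t^N$. Solving the recurrence in closed form yields a hypergeometric expression whose asymptotics split into two regimes: an \emph{edge regime} $\lambda = O(L/t^2)$ in which, after rescaling $u=t^2\lambda/L$, $P_t^N$ converges to a universal Bessel-type profile $\Phi$ satisfying $\Phi(0)=1$ and admitting explicit decay/oscillation as $u\to\infty$; and a \emph{bulk regime} $\lambda\in[\epsilon,L]$ in which $|P_t^N(\lambda)|^2$ satisfies a quantitative envelope obtained by a WKB analysis of the recurrence. Following the localization argument used in the sketch of Theorem~\ref{thm: jacobirates}, I split $I_l(t)$ at a fixed $\epsilon>0$; the bulk envelope together with the boundedness of the density $\dif\nu_{\tau,\xi}/\dif\lambda$ on $[\epsilon,L]$ shows that $\int_\epsilon^L P_t^N(\lambda)^2\lambda^l\dif\nu_{\tau,\xi}$ is of lower order in every announced regime, which is why $\tau$ does not appear in the final exponents.

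The dominant contribution therefore comes from $[0,\epsilon]$. Writing $\dif\nu_{\tau,\xi}(\lambda) = c_0\lambda^\xi(1+o(1))\dif\lambda$ near $0$ and changing variables $u=t^2\lambda/L$ decomposes the edge integral into the scaling window ($u=O(1)$) and an intermediate region ($u$ large while $\lambda\le\epsilon$). For $l=1$, the tail behaviour of $|\Phi(u)|^2 u^{1+\xi}$ yields the classical convergence/logarithmic/polynomial trichotomy at $\xi=-\tfrac12$: in the convergent case $\xi<-\tfrac12$ the rate is set by the scaling window alone and equals $t^{-2(\xi+2)}$; the boundary case $\xi=-\tfrac12$ produces a $\log t$ factor and rate $t^{-3}\log t$; and in the divergent case $\xi>-\tfrac12$ the dominant contribution is obtained by matching the scaling window against the bulk envelope, yielding $t^{-(\xi+7/2)}$. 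For $l=2$ the extra power of $\lambda$ always places us in the divergent regime under the standing assumption $\xi>-1$, producing the single exponent $-(\xi+9/2)$.

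The main obstacle is obtaining a uniform, quantitative approximation of $P_t^N$ by its Bessel profile on the full scaling window, together with a bulk envelope sharp enough to deliver the non-obvious exponent $-(\xi+\tfrac72)$ in the divergent regime of the function-value case. Once this uniform asymptotic is in place, the regime analysis reduces to classical estimates of integrals of Bessel-type special functions against power weights, as carried out for $\xi=-\tfrac12$ in \citet{paquette2020halting}.
\end{sketch}
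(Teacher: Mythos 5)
Your overall route coincides with the paper's: reduce via Theorem~\ref{thm: metrics}, replace the Nesterov residual polynomial by the Bessel-type asymptotic of \citet{paquette2020halting} (Lemma B.2, extended to densities behaving like $\lambda^\xi$ at zero), argue that everything away from the left edge is negligible, and read off a trichotomy. The gap is in how you organize the edge analysis. The correct asymptotic is $P_t^N(\lambda)\approx \frac{2J_1(t\sqrt{\lambda})}{t\sqrt{\lambda}}\,e^{-\lambda t/2}$, which lives on \emph{two} scales: the Bessel factor varies on $\lambda\asymp t^{-2}$, while the exponential damping varies on $\lambda\asymp t^{-1}$. Your framework compresses this into a single $t$-independent profile $\Phi(u)$ in $u=t^2\lambda/L$ plus a bulk envelope on $[\epsilon,L]$; but in the variable $u$ the damping is $e^{-u/(2t)}$, so it cannot survive in any universal limit profile, and it is precisely what produces the exponents you must prove. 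In the divergent regime $\xi>-1/2$ the dominant mass sits at $\lambda\asymp t^{-1}$, where $P_t^N(\lambda)^2\asymp t^{-3}\lambda^{-3/2}e^{-\lambda t}$; substituting $w=\lambda t$ in $\int \lambda^{\xi+l}P_t^N(\lambda)^2\,\dif\lambda$ gives $\Theta\bigl(t^{-(\xi+l+5/2)}\bigr)$ up to an incomplete-Gamma constant, i.e.\ $t^{-(\xi+7/2)}$ for $l=1$ and $t^{-(\xi+9/2)}$ for $l=2$. This is the paper's ``middle region'' $[\epsilon/t,\epsilon/\sqrt{t}]$, with $[\epsilon/\sqrt{t},1]$ exponentially small \emph{because of the damping}. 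If instead you integrate the undamped tail $|\Phi(u)|^2\sim u^{-3/2}$ out to the fixed-$\epsilon$ boundary $u\asymp\epsilon t^2$ and ``match against the bulk envelope,'' you obtain $\Theta(t^{-3})$ for every $\xi>-1/2$ and both objectives, which overestimates the true rates; moreover the bulk $[\epsilon,L]$ is exponentially negligible and contributes nothing to these exponents, so it cannot serve as the other side of the matching. As literally described, your divergent-case step would therefore fail.

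A secondary point: substituting the Bessel asymptotic for $P_t^N$ must be accompanied by a quantitative error bound against the weight $\lambda^{\xi+l}\dif\nu_{\tau,\xi}$ (the analogue of the $O(t^{-(l+25/12)})$ estimate in \citet{paquette2020halting}, which the paper claims extends to general $\xi$). You correctly flag uniformity as the main obstacle, but note that the cure is not a sharper envelope on $[\epsilon,L]$: it is to keep the factor $e^{-\lambda t/2}$ explicitly and to split the edge at the intermediate scale $\lambda\asymp t^{-1}$ (the paper uses $\epsilon/t$ and $\epsilon/\sqrt{t}$), rather than only at $\lambda\asymp t^{-2}$ and $\lambda=\epsilon$.
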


The gap between the asymptotic average-case rates of Nesterov and the optimal ones is of the order of $t^{\xi+l-1/2}$, when $\xi+l>1/2$, $\log t$ when $\xi+l=1/2$ and $0$ otherwise.
This result shows that Nesterov is almost optimal when the concentrations near $0$ are relatively high, i.e., low $\xi$.

\begin{restatable}[Gradient descent average-case rates]{theorem}{gdrates} \label{the: gd rates}
Consider the distribution $\nu_{\tau,\xi}$. Then for gradient descent
\begin{align}
&\mathbb{E}[f(\xx_t)-f(\xx^\star)]=\Theta(t^{-(\xi+2)}),\\
	&\mathbb{E}[\|\nabla f(\xx_t)\|^2_2] =\Theta(t^{-(\xi+3)}).
\end{align}

\end{restatable}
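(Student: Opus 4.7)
The plan is to recognize that the Generalized Chebyshev rates machinery does not directly apply to gradient descent, because its residual polynomial is not orthogonal. Instead, the analysis is much more elementary. Gradient descent with step size $1/L$ corresponds to the residual polynomial $P_t(\lambda)=(1-\lambda/L)^t$ (this is the unique degree-$t$ residual polynomial generated by the momentum recurrence of Remark~\ref{rmk: momentum based} with $h_t=-1/L$, $m_t=0$). Substituting into Theorem~\ref{thm: metrics} reduces both quantities to integrals of the form
\begin{equation*}
I_l(t)\;=\;\int_0^L \Bigl(1-\tfrac{\lambda}{L}\Bigr)^{2t}\lambda^{\,l}\,\dif\nu_{\tau,\xi}(\lambda),
\end{equation*}
with $l=1$ for the function-value gap and $l=2$ for the squared gradient norm, so the entire theorem boils down to establishing $I_l(t)=\Theta(t^{-(\xi+l+1)})$.

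To evaluate $I_l(t)$, I would split the integration domain into three pieces $[0,\epsilon]$, $[\epsilon,L-\epsilon]$, and $[L-\epsilon,L]$ for a small fixed $\epsilon>0$. On $[\epsilon,L-\epsilon]$ the factor $(1-\lambda/L)^{2t}$ is bounded by $(1-\epsilon/L)^{2t}$, which is exponentially small in $t$ and therefore negligible compared to any polynomial rate. On $[L-\epsilon,L]$, the assumption $\dif\nu_{\tau,\xi}=\Theta((L-\lambda)^\tau)\dif\lambda$ gives a contribution of order $\int_0^\epsilon u^{2t+\tau}\dif u = O(t^{-(\tau+1)})$, which decays faster than any negative power of $t$ and is again negligible. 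Hence only the region near $0$ matters asymptotically.

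On $[0,\epsilon]$, use the assumption $\dif\nu_{\tau,\xi}=g(\lambda)\lambda^\xi\dif\lambda$ with $g$ continuous and $g(0)>0$, together with the substitution $u = 2t\lambda/L$. Writing $(1-\lambda/L)^{2t}=\exp\bigl(2t\log(1-u/(2t))\bigr)$ and applying a dominated-convergence style argument (the integrand is dominated by $e^{-u/2}u^{\xi+l}$ on $[0,\epsilon t]$ for $t$ large, and pointwise converges to $e^{-u}u^{\xi+l}$), we obtain
\begin{equation*}
I_l(t)\;=\;g(0)\Bigl(\tfrac{L}{2t}\Bigr)^{\xi+l+1}\!\!\int_0^{\infty}\!\!e^{-u}u^{\xi+l}\dif u\,(1+o(1))\;=\;\Theta\!\bigl(t^{-(\xi+l+1)}\bigr),
\end{equation*}
recovering the Gamma function $\Gamma(\xi+l+1)$, which is finite since $\xi>-1$. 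This yields $\Theta(t^{-(\xi+2)})$ for $l=1$ and $\Theta(t^{-(\xi+3)})$ for $l=2$, as claimed.

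The main technical obstacle is the lower bound: showing a $\Theta$ rather than merely $O$. The upper bound is immediate from bounding $g$ from above and extending the integral to $+\infty$. For the lower bound, one needs to use the positivity assumption on $\nu_{\tau,\xi}'$ near $0$ (so that $g(0)>0$) and restrict the substituted integral to a compact set $[0,M]$ where the convergence $(1-u/(2t))^{2t}\to e^{-u}$ is uniform; this provides a matching lower bound of the same order. The remaining case $\xi=-1$ never arises because the standing assumption requires $\dif\nu_{\tau,\xi}=\Theta(\lambda^\xi)$ to be integrable near $0$, forcing $\xi>-1$ and hence finiteness of $\Gamma(\xi+l+1)$.
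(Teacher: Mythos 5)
Your proposal is correct and reaches the same two structural conclusions as the paper (only the mass near $0$ matters, and there it scales like $t^{-(\xi+l+1)}$), but the core computation is done by a genuinely different route. The paper evaluates the model integral exactly via the Beta-function identity $\int_0^1(1-\lambda)^{2t+\tau}\lambda^{\xi+l}\dif\lambda=\Gamma(\xi+l+1)\Gamma(2t+\tau+1)/\Gamma(2t+\xi+l+\tau+2)=\Theta(t^{-(\xi+l+1)})$ and then transfers this rate to a general $\nu_{\tau,\xi}$ by the $\Theta$-comparison of densities near $0$, after discarding $[\epsilon,L]$ as $O((1-\epsilon/L)^{2t})$. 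You instead run a Laplace-type rescaling $u=2t\lambda/L$ with dominated convergence, which recovers the same exponent and, under your slightly stronger local hypothesis $\dif\nu=g(\lambda)\lambda^\xi\dif\lambda$ with $g$ continuous and $g(0)>0$, even the sharp constant $g(0)\Gamma(\xi+l+1)$; the paper's standing assumption is only $\dif\nu_{\tau,\xi}=\Theta(\lambda^\xi)$ near $0$, under which your argument should be phrased as a two-sided sandwich (which you essentially acknowledge for the lower bound), and that is all the stated $\Theta$ result needs. One concrete slip to fix: your bound for the region $[L-\epsilon,L]$ is misstated --- $\int_0^{\epsilon/L}u^{2t+\tau}\dif u$ is not $O(t^{-(\tau+1)})$ ``decaying faster than any polynomial''; as written, $O(t^{-(\tau+1)})$ could dominate $t^{-(\xi+l+1)}$ when $\tau\le\xi+l$. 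The correct (and stronger) statement is that on $[L-\epsilon,L]$ one has $(1-\lambda/L)^{2t}\le(\epsilon/L)^{2t}$, so this contribution is exponentially small, exactly as in the paper's treatment of the whole region away from $0$; with that one-line repair your argument is complete.
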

\begin{sketch}
From the simple form of the polynomial associated with gradient descent, we have a closed-form (Eq. \eqref{eq: closed form}) expression for the performance of gradient descent on the Beta weights. For a general $\nu_{\tau,\xi}$, the mass that is $\epsilon$ away from $0$ is $O((1-\epsilon)^{2t})$; thus, the performance of gradient descent is asymptotically the same as on the Beta weights.
\end{sketch}

Observe for the function value that the rate for Nesterov is $t^{-2}$ and the rate for gradient descent is $t^{-1}$ when $\xi \rightarrow -1$.

We now consider the optimal rates for a Gamma distribution.

\begin{restatable}[Laguerre method rates]{theorem}{laguerrerates} \label{thm:laguerrerates}
Let $\alpha>-1$ and $\mu_\alpha$ be a Gamma distribution, i.e. $\dif\mu_\alpha(\lambda)={\lambda^\alpha e^{-\lambda}}/{\Gamma(\alpha+1)}\dif \lambda$. The optimal rates are given by the Laguerre method of appropriate tuning and
\begin{equation}
    \mathbb{E}[f(\xx_t)-f(\xx^\star)]=\Theta(t^{-(\alpha+2)})\,. 
\end{equation}
\end{restatable}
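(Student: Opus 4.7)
\begin{sketch}
The plan is to specialize the polynomial framework of Section~\ref{section: average case} to the Gamma weight. By Theorem~\ref{thm: metrics}, any first-order method with residual polynomial $P_t$ satisfies
\begin{equation*}
    \mathbb{E}[f(\xx_t) - f(\xx^\star)] \;=\; \frac{1}{2\,\Gamma(\alpha+1)}\int_0^\infty P_t^2(\lambda)\,\lambda^{\alpha+1} e^{-\lambda}\,\dif\lambda,
\end{equation*}
so both the upper bound and the lower bound reduce to the same weighted polynomial best-approximation problem on $[0,\infty)$. Proposition~\ref{prop: optimality} (with $l=1$ and $\nu=\mu_\alpha$) identifies the minimizing residual polynomial as the one proportional to the $t$-th orthogonal polynomial with respect to $\lambda^{\alpha+2}e^{-\lambda}\dif\lambda$, i.e.\ the generalized Laguerre polynomial $L_t^{(\alpha+2)}$. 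This pins down the ``appropriate tuning'' of the Laguerre method and gives $P_t^\star(\lambda) = L_t^{(\alpha+2)}(\lambda)/L_t^{(\alpha+2)}(0)$.

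Next I would estimate $L_t^{(\alpha+2)}(0)^{-2}\int_0^\infty (L_t^{(\alpha+2)})^2 \lambda^{\alpha+1} e^{-\lambda}\dif\lambda$ asymptotically. The normalizer is the classical $L_t^{(\alpha+2)}(0) = \Gamma(t+\alpha+3)/(t!\,\Gamma(\alpha+3))$, whose Stirling asymptotics give $\Theta(t^{\alpha+2})$. The subtle piece is the integral, because the weight $\lambda^{\alpha+1}e^{-\lambda}$ is \emph{one power below} the natural orthogonality weight $\lambda^{\alpha+2}e^{-\lambda}$ of $L_t^{(\alpha+2)}$. I would unlock it via the ladder identity $L_n^{(\gamma)} = L_n^{(\gamma+1)} - L_{n-1}^{(\gamma+1)}$, which telescopes to $L_t^{(\alpha+2)} = \sum_{k=0}^t L_k^{(\alpha+1)}$. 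Squaring and invoking the orthogonality of $\{L_k^{(\alpha+1)}\}$ against $\lambda^{\alpha+1}e^{-\lambda}$ collapses the double sum to $\sum_{k=0}^t \Gamma(k+\alpha+2)/k! = \Theta(t^{\alpha+2})$, again by Stirling.

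Combining the two estimates gives $\int_0^\infty (P_t^\star)^2 \lambda^{\alpha+1}e^{-\lambda}\dif\lambda = \Theta(t^{\alpha+2}/t^{2(\alpha+2)}) = \Theta(t^{-(\alpha+2)})$, which after the overall factor $1/(2\Gamma(\alpha+1))$ yields the claimed rate. The optimality half of the statement comes for free: since $P_t^\star$ minimizes the weighted integral over every residual polynomial of degree $\leq t$, no first-order method can beat this rate up to constants. The main obstacle is precisely the one-power mismatch between the cost weight and the natural orthogonality weight of $L_t^{(\alpha+2)}$; the telescoping ladder identity (or, equivalently, a direct Christoffel-function computation of $K_{t+1}(0,0)$ for the weight $\lambda^{\alpha+1}e^{-\lambda}$) is what turns the integral into a tractable orthogonal sum, after which everything reduces to routine Gamma-ratio asymptotics.
\end{sketch}
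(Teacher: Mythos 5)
Your proposal is correct and follows essentially the same route as the paper: identify the optimal residual polynomial via Proposition~\ref{prop: optimality} as the normalized Laguerre polynomial $L_t^{(\alpha+2)}$, use the summation identity $\sum_{k=0}^t L_k^{(\alpha+1)} = L_t^{(\alpha+2)}$ (your telescoped ladder identity is exactly Szeg\H{o}'s (5.1.13) invoked in the paper) together with orthogonality against the weight $\lambda^{\alpha+1}e^{-\lambda}$ to collapse the integral to $\binom{t+\alpha+2}{t}$, and divide by $L_t^{(\alpha+2)}(0)^2$ to get $\Theta(t^{-(\alpha+2)})$. Your write-up is in fact slightly cleaner than the paper's (which has typographical slips such as a dropped square), so no changes are needed.
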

Note that this result does not have the same universality as the others because of the non-compacity of the distribution's support. These rates are contrasted  to the worst-case lower bound on the optimization of non-smooth functions by first-order methods, which gives
\begin{equation*}
    f(\xx_k)-f(\xx^\star)\geq\frac{C}{\sqrt{t}}\,.
\end{equation*}
These rates are not found when $\alpha\rightarrow-1$, indicating that the worst-case is particularly pessimistic in this scenario.
\begin{remark}
All of the expected rates we state are almost deterministic on the high dimensional setting as per the concentration results shown in \cite{paquette2020halting}
\end{remark}

\section{Experiments}

\begin{figure*}
    \centering
    \includegraphics[width=.48\textwidth]{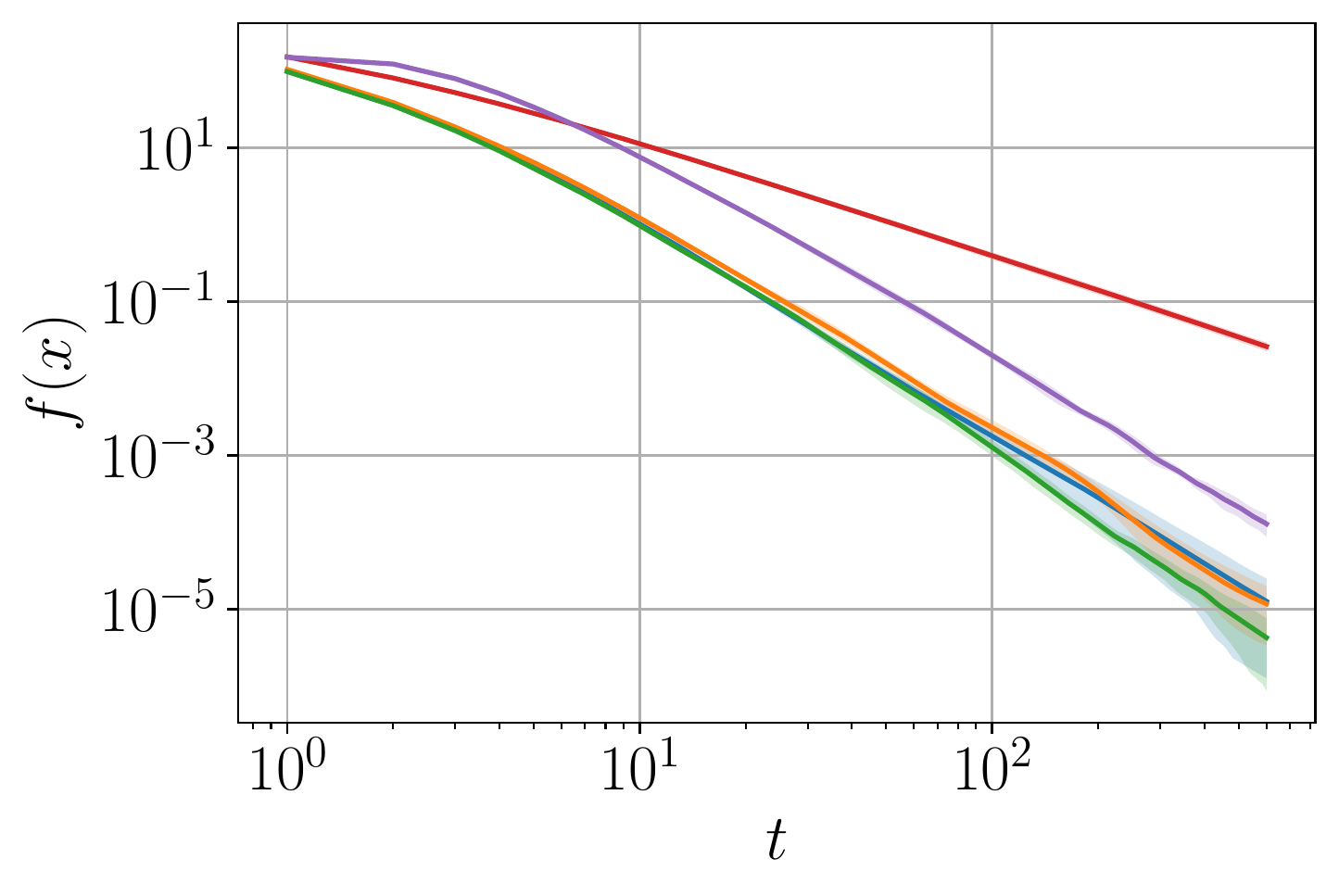}
    \hfill
    \includegraphics[width=.48\textwidth]{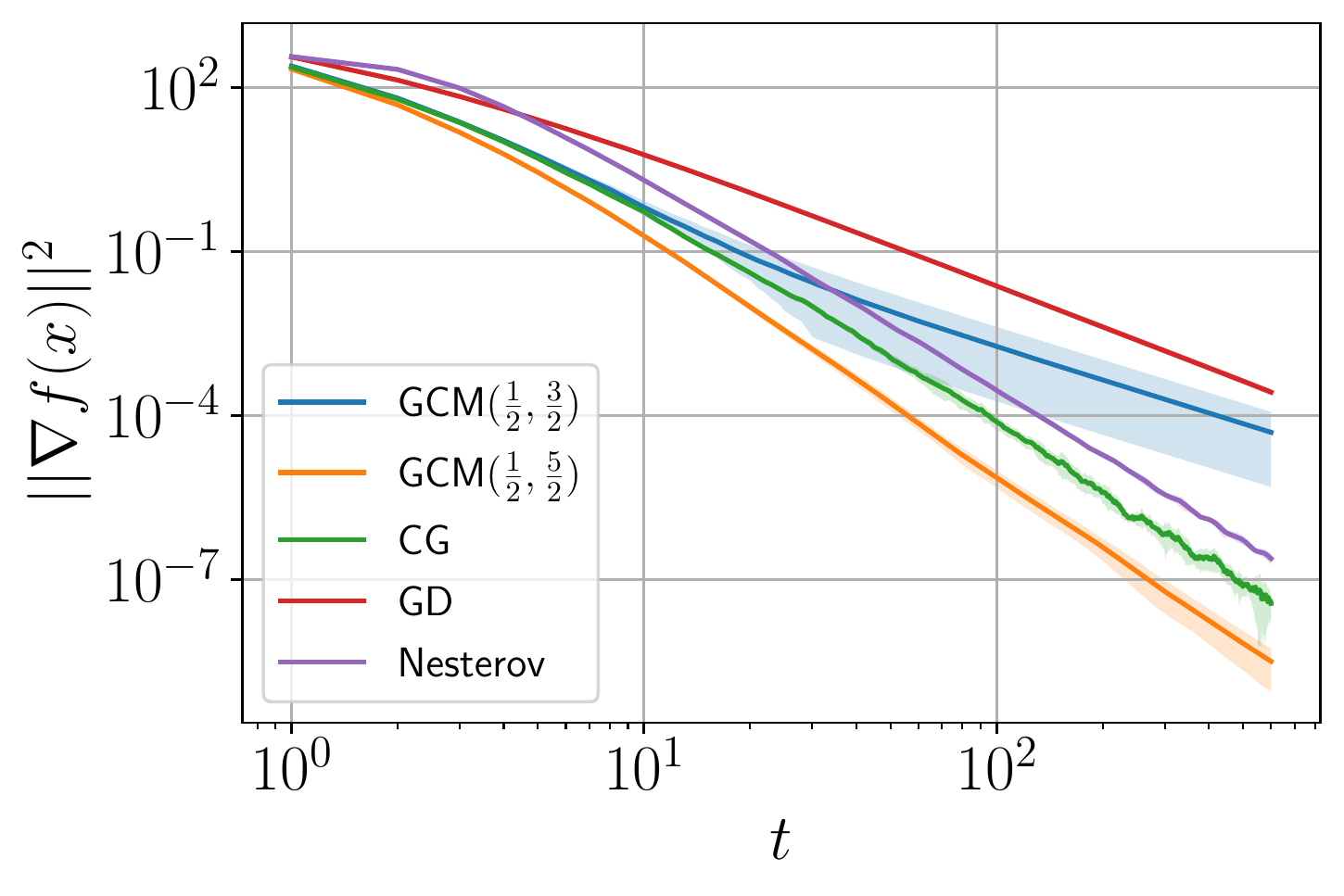}
    \caption{Rates for a synthetic problem, simulating the Marchenko-Pastur distribution. CG stands for conjugate gradient and GD for gradient descent. Shades are standard deviation for 8 different samplings of the distribution and of the random initialization. Note that both tunings of the GCM achieve performance in function value very close to the one of Conjugate Gradient, which is optimal for every draw of the problem.}
 \label{fig: mp performance}
    \vspace{1cm}
    \includegraphics[width=.48\textwidth]{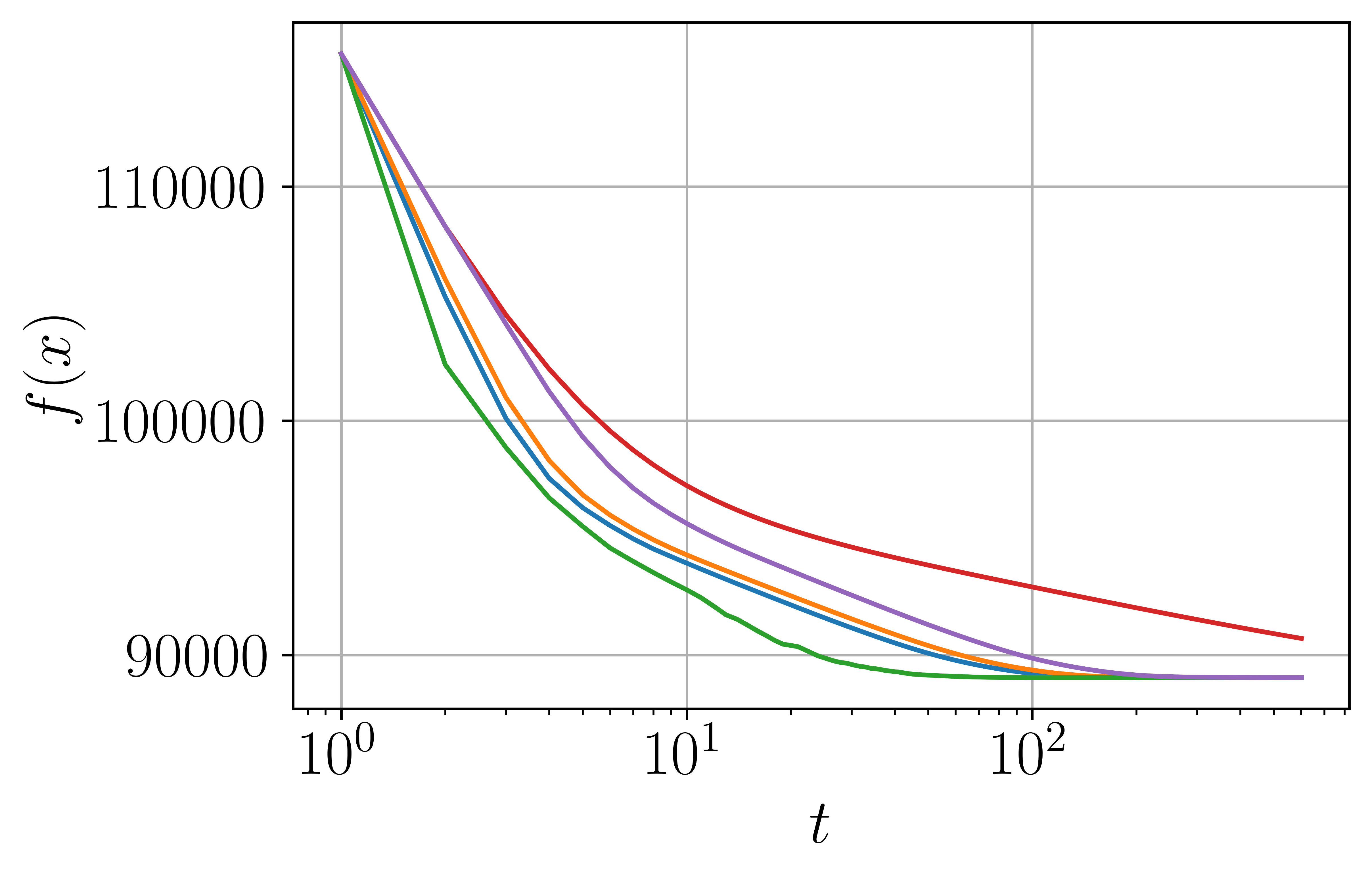}
    \hfill
    \includegraphics[width=.48\textwidth]{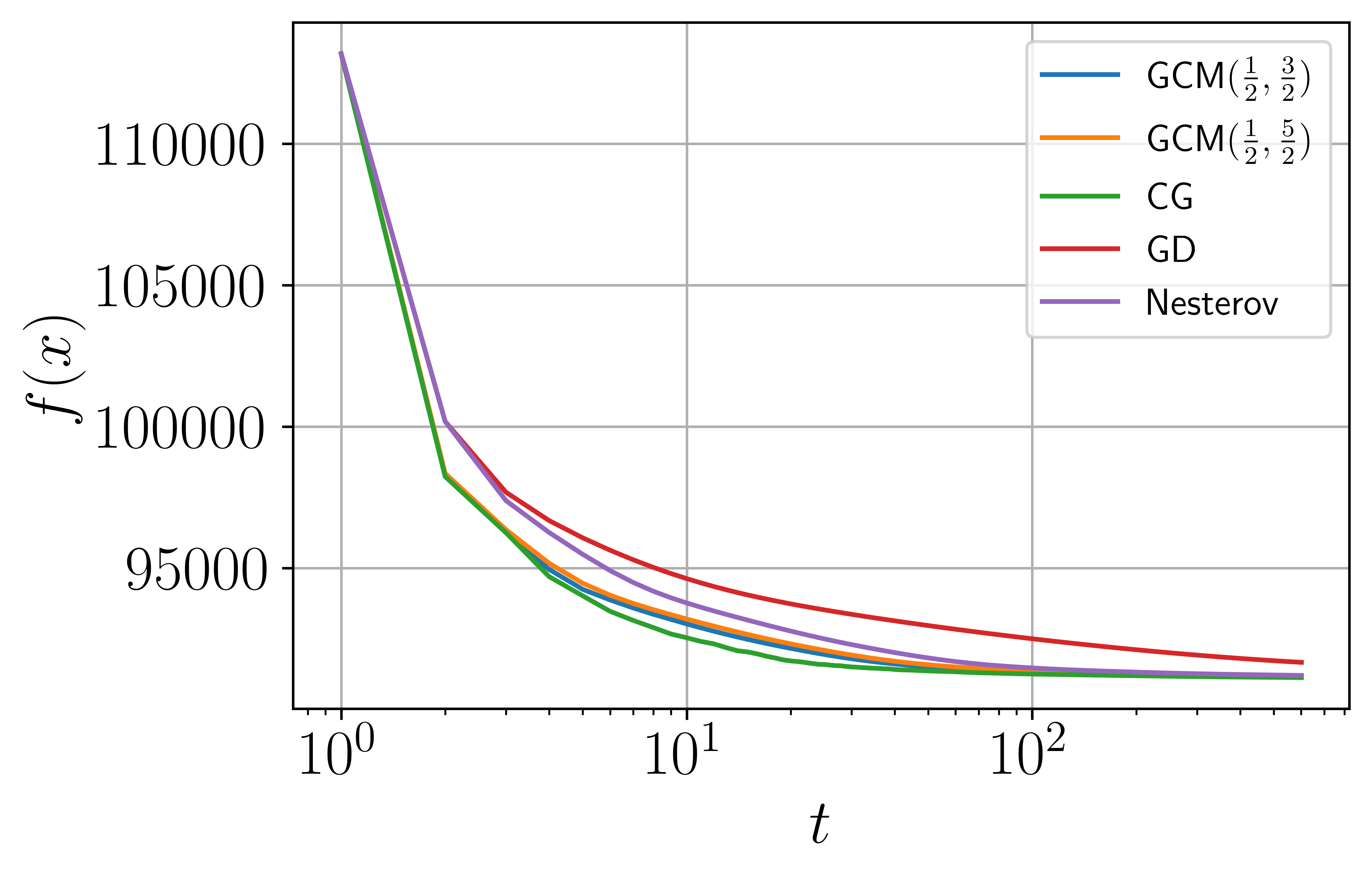}
    \caption{Function values for different methods where the data comes from CIFAR-10 Inception features \textit{(left)} and MNIST features \textit{(right)}. The properly tuned GCM achieves remarkable performance under these non-synthetic spectra.
}
    \vspace{1cm}
    \includegraphics[width=.48\textwidth]{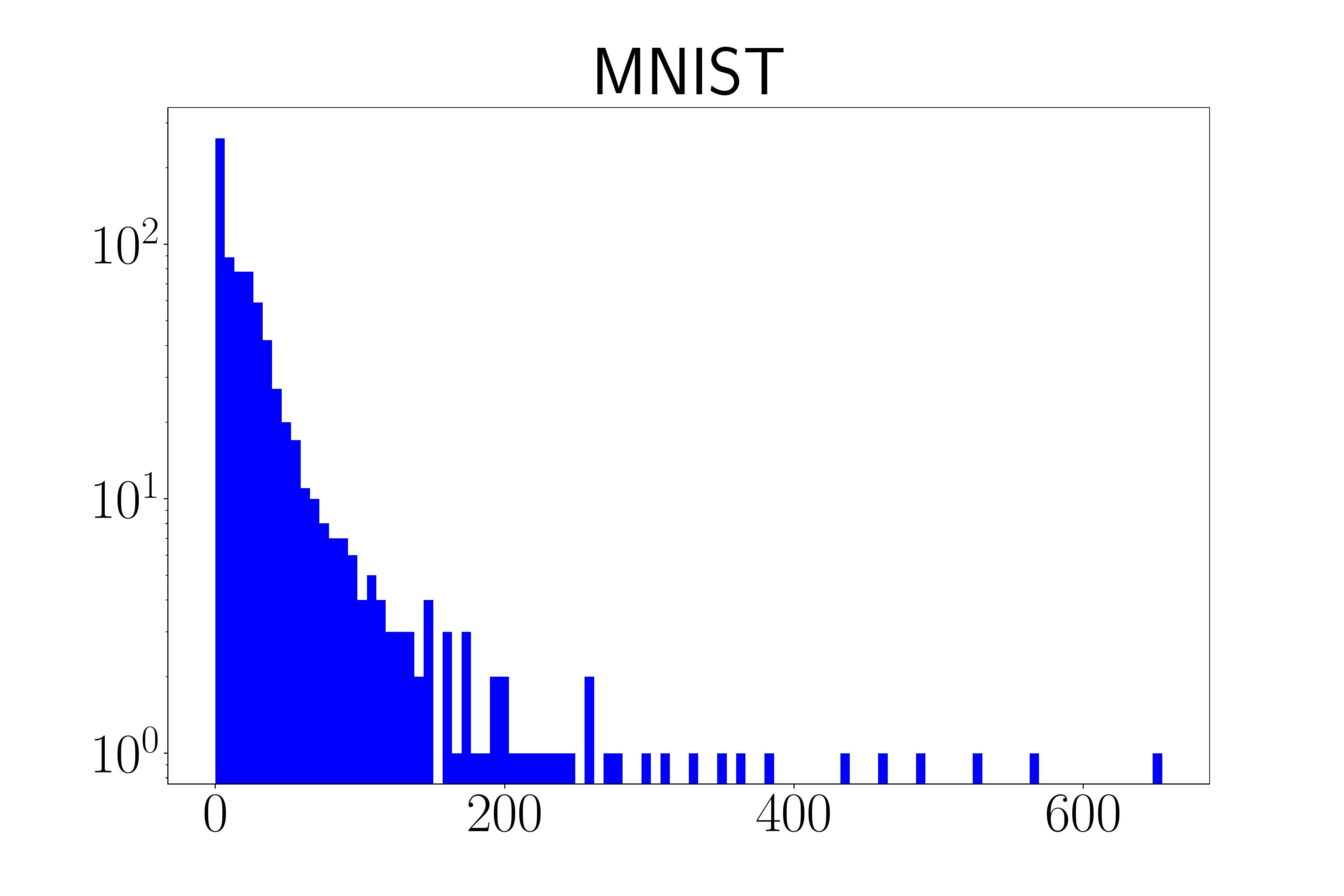}
    \hfill
    \includegraphics[width=.48\textwidth]{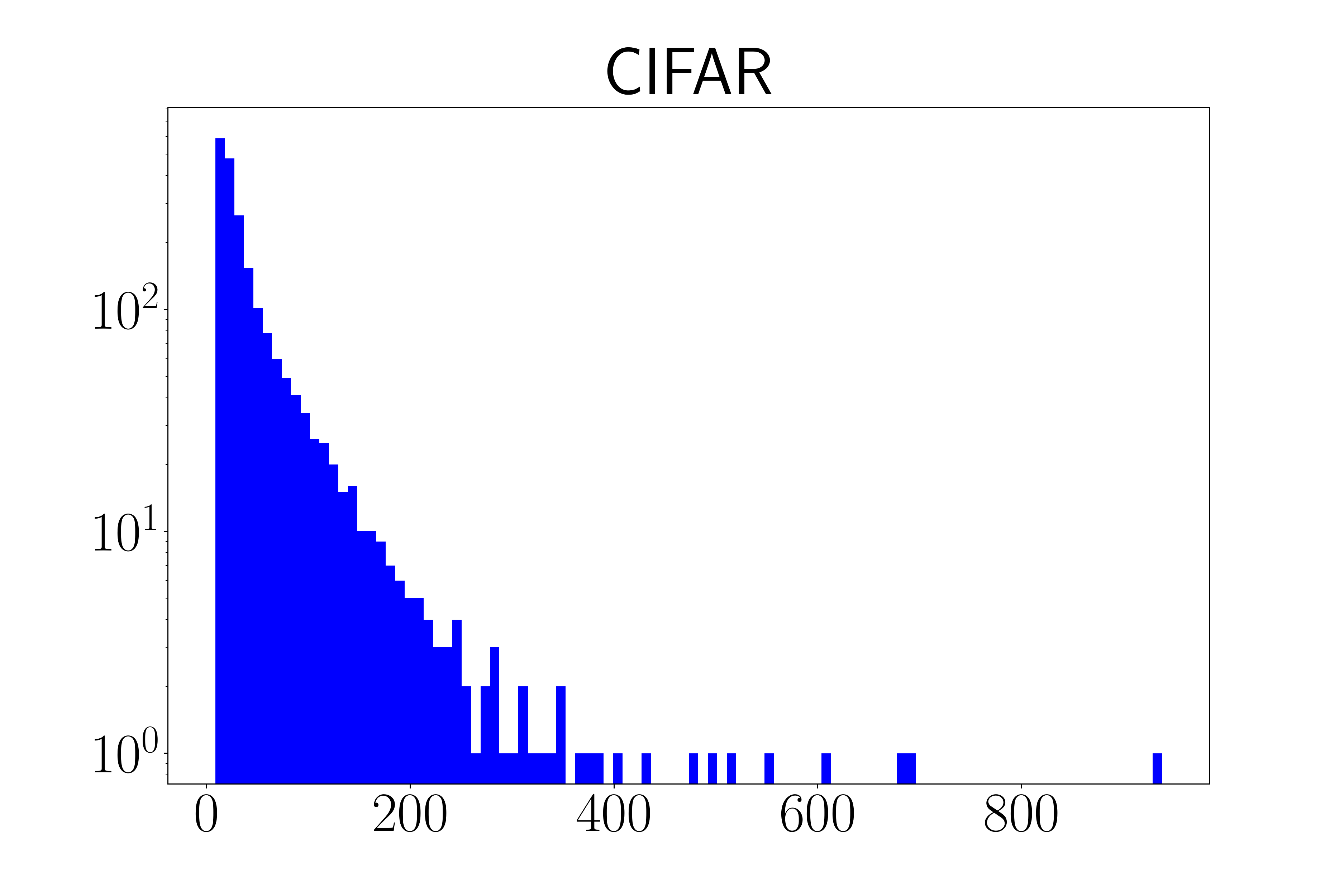}
    \caption{Empirical spectrum for the covariance
matrix of the features. On the left we considered the MNIST raw features and on the right we considered the features of an inception net applied on the CIFAR10 dataset.
}
\label{fig: real data}
\end{figure*}

\begin{figure}[t] 
    \centering
    \includegraphics[width=0.5\textwidth]{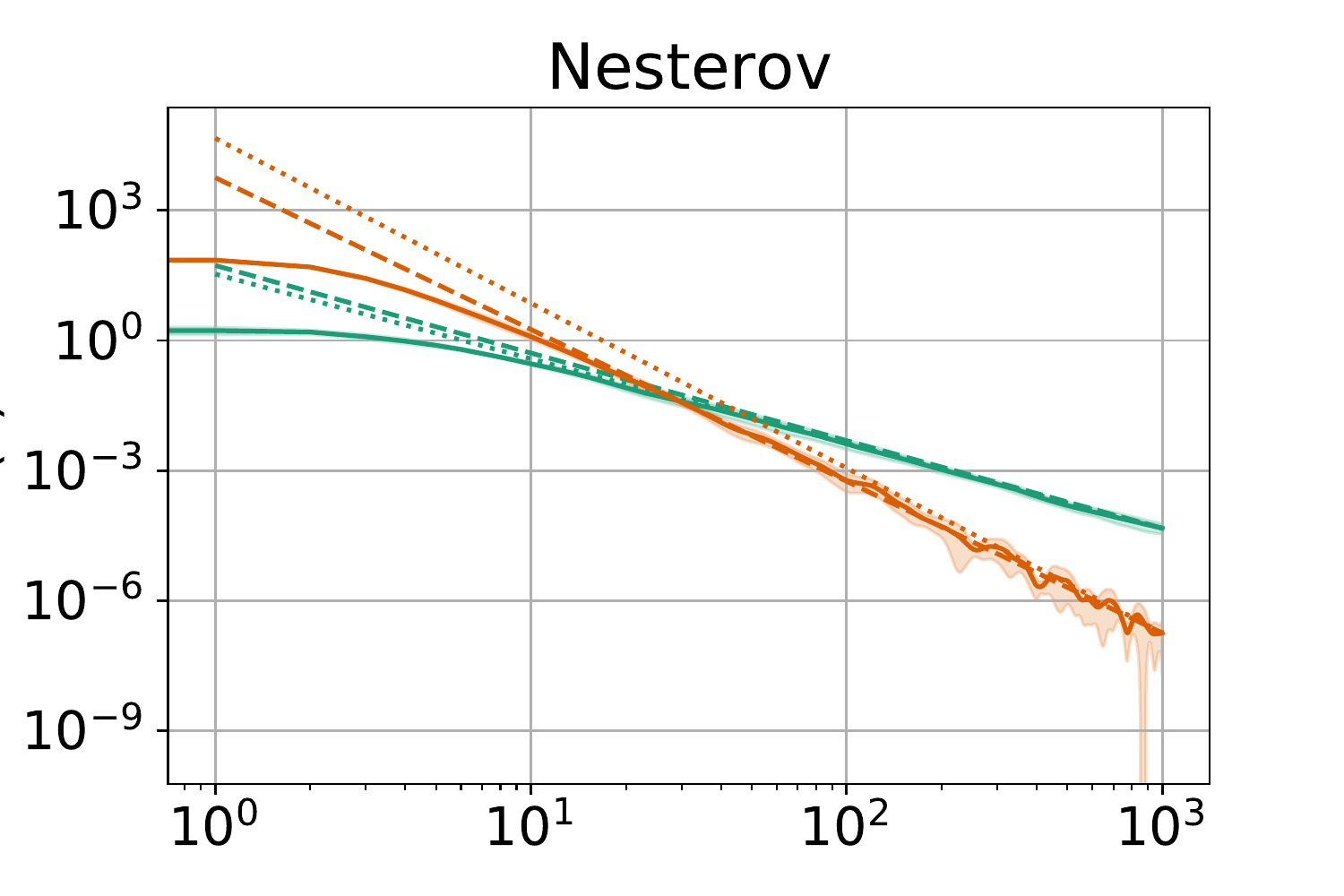}
    \includegraphics[width= 0.5\textwidth]{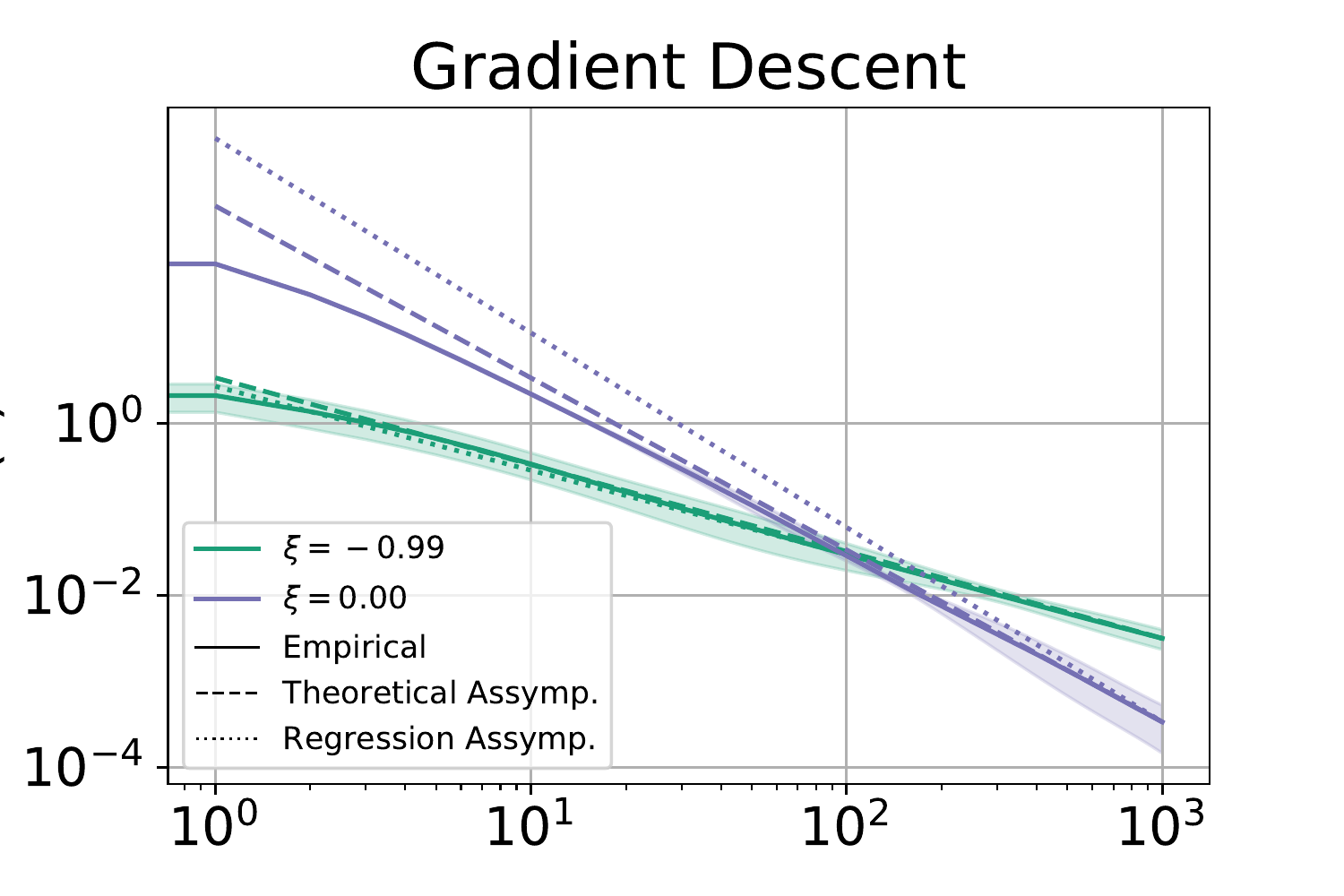}\\
        \includegraphics[width= 0.5\textwidth]{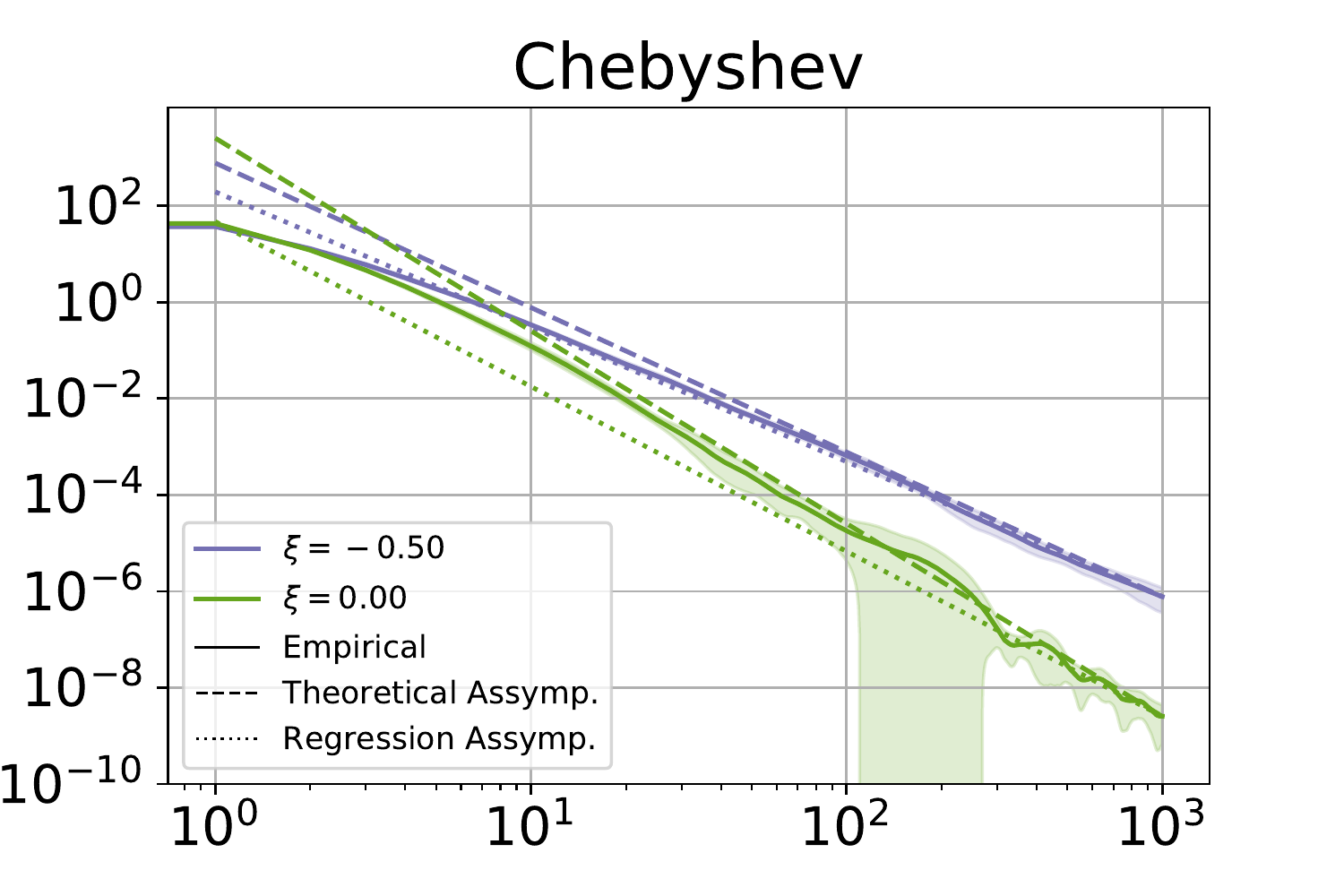}
    \caption{ 
    Comparison between experiments run on synthetic Beta distribution and theoretical asymptotic. Shades are standard deviation for 8 different samplings of the distribution and of the random initialization. Regression Assymptotic is the linear regression of the last 700 values, in logarithmic scale . Y-axis is the function value.}
    \label{fig: last figure}
\end{figure}

We generate random quadratic problems in two different ways. First, by sampling $\HH=\XX\XX^\top$ where the entries of $\XX$ are i.i.d. Gaussian. Since the ESD of $\HH$ converges to the Marchenko-Pastur distribution as the number of samples and features, go to infinity. This corresponds to a Beta distribution with parameters $(\tau,\xi) = (1/2,-1/2)$.

The other process we use to generate random quadratic problems is by sampling $\Lambda \in \RR^d$ from the corresponding Beta distribution and taking $\HH=\UU\diag(\Lambda)\UU^\top$, where $\UU$ is an independently sampled orthonormal matrix. 

We let $\xx^\star=\boldsymbol{0}$ and sample $\xx_0$ from a standard Gaussian distribution. In all experiments, we use the problem's instance largest eigenvalue to calibrate each method (e.g., gradient descent's step size is $1/L$).

Our theoretical rates in Theorem~\ref{the: gd rates} and Theorem~\ref{the: neste rates} respectively for the Nesterov method and gradient descent are precise under the approximate range $-1<\xi<0$  as we show in Figure~\ref{fig: last figure}. Distributions with higher $\xi$ need many samples. Otherwise, they behave as strongly convex functions.

The Generalized Chebyshev Method shows large variance for the different objectives  $r_t$ in some settings. Consider $\beta^\star$ the optimal $\beta$ value for a given $\xi$. If $\beta<\beta^\star$ or $\xi$ is low the function value $f(\xx_t)$, when seen as random variable w.r.t. the random initialization is slow to converge with increasing dimension. This is shown in appendix \ref{appendix: experiments}. 

The GCM with  $\beta>\beta^\star$ performs corresponding to the theory, and it is non-asymptotically very close to the performance of $\beta^\star$. High values of $\beta$ also perform very well on non-synthetic data, suggesting we should use these values in practice.

\section{Conclusion and further work}
In this paper, we've established that the asymptotic convergence of first order methods on quadratic problems in the convex regime depend on the concentration of the Hessian's eigenvalue near the edges of the spectrum's support. We further contributed to the theoretic understanding of the Nesterov's method performance and established the contrast between the worst-case and average-case in the main regimes considered in Optimization.

\subsection*{Acknowledgements}
Gauthier Gidel is supported by an IVADO grant and a Canada CIFAR AI Chair. 

\clearpage

\bibliographystyle{icml2022}
\bibliography{main.bib}

\clearpage

\appendix
\onecolumn
\section{Proofs of Section \ref{section: robust average}}

\metrics*
\begin{proof}
\newcommand\xinit{\xx_0-\xx^\star}
We remark that by the definition of the expected spectral distribution $\mu$ of $\HH$, we have for a polynomial $g$
\begin{equation}
    \EE_H[\tr(g(\HH))]=\int g(\lambda)\dif \mu(\lambda). 
\end{equation}
We know that $\xx_t-\xx^\star=P_t(\HH)(\xinit)$. We can write $\|\xx_t-\xx^\star\|^2$ in terms of a trace and use the independence of $\HH$ and $\xinit$ to connect it to the ESD:
\begin{align}
    \mathbb{E}\|\xx_t-\xx^\star\|^2&=\EE[\tr((\xinit)^\top P_t(\HH)^2(\xinit))], \\
    &=\EE_{\HH,\xinit} [\tr(P_t(\HH)^2(\xinit)(\xinit)^\top]\\
    &=\EE_\HH\left[\tr(P_t(\HH)^2\EE_{\xinit}[(\xinit)(\xinit)^\top])\right)],  \\
    &=R^2\EE_\HH[P_t(\tr(\HH))^2]=R^2 \int P_t(\lambda)^2\dif\mu(\lambda)\,.
\end{align}
For the gradient and function value the reasoning is the same by noticing that
\begin{align}
    \EE[f(\xx_t)-f(\xx^\star)]&=\frac{R^2}{2}\EE[ \tr((\xinit)^\top P_t(\HH)\HH P_t(\HH)(\xinit))]\\
    &=\EE_\HH[\tr((\lambda P_t)(\HH)^2)]=\frac{R^2}{2} \int \lambda P_t(\lambda)^2\dif\mu(\lambda),
\end{align}
where $\lambda P_t$ is also a  polynomial. As $\nabla f(\xx_t)=\HH(\xx_t-\xx^\star)$.
\begin{align}
    \EE\|\nabla f(\xx_t))\|^2&=\EE[ \tr((\xinit)^\top P_t(\HH)\HH^2 P_t(\HH)(\xinit))]\\
    &=\EE_\HH[\tr((\lambda^2 P_t)(\HH)^2)]=R^2 \int \lambda^2P_t(\lambda)^2\dif\mu(\lambda)\,.
\end{align}

\end{proof}

\optimality*
\begin{proof}
This is a direct application of the following lemma w.r.t. to the distribution $\nu$ defined as $d\nu(\lambda)=\lambda^l \dif\mu(\lambda)$.
\begin{lemma}\citep{fischer1996polynomial}
 The residual polynomial of degree $t$ that minimizes $\int P_t^2(\lambda)d\nu(\lambda)$ is given by the degree $t$ residual
orthogonal polynomial with respect to the weight function $\lambda d\nu(\lambda)$.
\end{lemma}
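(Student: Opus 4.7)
The plan is to treat this as a constrained quadratic optimization problem on a finite-dimensional space of polynomials and read off the first-order optimality conditions as orthogonality relations against an appropriately re-weighted measure. The objective $P \mapsto \int P^2\, d\nu$ is a strictly convex quadratic form on the space of polynomials of degree $\leq t$ (strict convexity following from the assumption that $\nu$ has continuous, hence infinite, support), and the constraint $P(0)=1$ is a single affine condition, so a unique minimizer $P_t^*$ exists.

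To derive the optimality condition, I would perturb $P_t^*$ along feasible directions. A polynomial perturbation $P_t^* + \varepsilon R$ of degree $\leq t$ remains residual if and only if $R(0)=0$. Differentiating $\int (P_t^*+\varepsilon R)^2\, d\nu$ at $\varepsilon=0$ yields the stationarity condition
\[
\int P_t^*(\lambda)\, R(\lambda)\, d\nu(\lambda)=0
\]
for every polynomial $R$ of degree $\leq t$ with $R(0)=0$. The key observation is that any such $R$ factors as $R(\lambda)=\lambda S(\lambda)$ with $S$ a polynomial of degree $\leq t-1$, so the stationarity condition becomes
\[
\int P_t^*(\lambda)\, S(\lambda)\, \lambda\, d\nu(\lambda)=0
\]
for every $S$ of degree $\leq t-1$. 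This is exactly the defining property of the degree-$t$ orthogonal polynomial with respect to the weighted measure $\lambda\, d\nu$. Combined with the fact that this orthogonal polynomial is unique up to scaling, the normalization $P_t^*(0)=1$ pins down $P_t^*$ as the residual orthogonal polynomial associated to $\lambda\, d\nu$.

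The only real obstacle will be checking that the residual normalization is well-defined, i.e., that the $\lambda\, d\nu$-orthogonal polynomial $Q_t$ satisfies $Q_t(0)\neq 0$ so that one may set $P_t^* = Q_t/Q_t(0)$. If instead $Q_t(0)=0$, one could write $Q_t(\lambda)=\lambda\tilde Q(\lambda)$ with $\deg\tilde Q=t-1$, and testing the orthogonality of $Q_t$ against $\tilde Q$ would give $\int \lambda^2\, \tilde Q(\lambda)^2\, d\nu(\lambda)=0$, forcing $\tilde Q\equiv 0$ by the continuous-support assumption and contradicting $\deg Q_t=t$. Hence $Q_t(0)\neq 0$, the residual normalization is valid, and the variation argument closes.
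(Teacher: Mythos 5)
Your proof is correct and follows essentially the same route as the paper: both derive the first-order optimality condition of the convex quadratic under the affine constraint $P_t(0)=1$ (the paper via coefficient parametrization and a Lagrangian, you via feasible perturbations $R$ with $R(0)=0$) and read off orthogonality to all polynomials of degree $\leq t-1$ with respect to $\lambda\,d\nu$. Your additional verification that the orthogonal polynomial $Q_t$ satisfies $Q_t(0)\neq 0$, so the residual normalization is well defined, is a welcome detail the paper leaves implicit.
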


Let $P_t(\lambda;(a_k))$ be the polynomial of degree $k$ defined by the coefficients $a_k$ and $F((a_k))=\int \lambda^lP_t^2(\lambda;(a_k))\dif\mu(\lambda)$. The restriction $P_t(0)=1$ is expressed as $g(a_k)=a_0-1=0$. We then consider the minimization problem:
\begin{align*}
    &\min_{a_k} F(a_k) \\
    \text{s.t. } \quad &g(a_k)=0
\end{align*}
We observe that the problem is convex in the variables $a_k$ and consider the lagrangian $\mathcal{L}(a_k;\alpha)=F(a_k)-\alpha g(a_k)$.
By the first-order optimality conditions the derivatives w.r.t. the coefficients  $a_k, k>0$ of the polynomials should be $0$:
\begin{align*}
    \frac{d}{da_k}\left(\int \lambda^lP_t^2(\lambda)\dif\mu(\lambda)\right)&=\int\lambda^l\frac{\dif}{\dif a_k}\left(\sum_{k=0}^\top a_k\lambda^kP_t(\lambda) \right) \dif \mu(\lambda),
    \\&=2\cdot\left(\int \lambda^{l+k}P_t(\lambda)\dif\mu(\lambda)\right)=0.
\end{align*}
This means that $P_t(\lambda)$ is orthogonal to any polynomial of degree $t-1$  w.r.t to the intern product $\langle.,.\rangle_{\lambda^{l+1}\dif\mu}$

\end{proof}

\section{GCM and Laguerre method derivation}
We will first state two lemmas that allow us to construct the optimal polynomials.

\begin{lemma} \label{lemma: to residual}
Let $(\tp_t)$ be a family polynomials with recurrence  
\begin{equation*}
    \tp_t(\lambda)=(\alpha_t+\beta_t\lambda)\tp_{t-1}(\lambda)+\gamma_t\tp_{t-2}(\lambda),
\end{equation*}
with $\tp_0$ a constant polynomial and $\tp_t(0)\neq0$ for all $t$. Then,
\begin{equation}
    P_t(\lambda)=(a_t+b_t\lambda)P_{t-1}(\lambda)+(1-a_t)P_{t-2}(\lambda)\,,
\end{equation} is the recurrence for $P_t(\lambda)=\tp_t(\lambda)/\tp_t(0)$. With:
\begin{align}
    a_t&=\delta_t\alpha_t, \\
    b_t&=\delta_t\beta_t, \\
    \delta_t&=(\alpha_t+\gamma_t\delta_{t-1}) \hspace{0.5 cm } (\delta_0=0).
\end{align}
\end{lemma}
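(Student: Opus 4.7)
The plan is to pass the recurrence through the normalization operator $\tilde{P}_t \mapsto \tilde{P}_t/\tilde{P}_t(0)$ and read off the new coefficients, using only the fact that the original recurrence, evaluated at $\lambda = 0$, provides a consistency identity that absorbs one of the three coefficients and yields the residual form $(a_t, b_t, 1-a_t)$.

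More concretely, I would introduce the shorthand $c_t := \tilde{P}_t(0)$, which is well defined and nonzero by hypothesis, so that $P_t(\lambda) = \tilde{P}_t(\lambda)/c_t$. First, I would evaluate the given three-term recurrence at $\lambda = 0$ to obtain the scalar relation
\begin{equation*}
  c_t = \alpha_t\, c_{t-1} + \gamma_t\, c_{t-2}.
\end{equation*}
Setting $\delta_t := c_{t-1}/c_t$ and dividing this identity by $c_{t-1}$ gives $1/\delta_t = \alpha_t + \gamma_t\, \delta_{t-1}$, which is exactly the update for $\delta_t$ claimed in the lemma (modulo the reciprocal, consistent with the $\delta_t \gets 1/(a_t + \gamma_t \delta_{t-1})$ line of Algorithm~\ref{algo: chebyshev}; I would flag and correct the typo in the statement).

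Next, I would divide the original recurrence by $c_t$ and rewrite every $\tilde{P}_k$ as $c_k P_k$, giving
\begin{equation*}
  P_t(\lambda) = (\alpha_t + \beta_t \lambda)\,\tfrac{c_{t-1}}{c_t}\, P_{t-1}(\lambda) + \gamma_t\, \tfrac{c_{t-2}}{c_t}\, P_{t-2}(\lambda) = (\alpha_t \delta_t + \beta_t \delta_t \lambda)\,P_{t-1}(\lambda) + \gamma_t\,\tfrac{c_{t-2}}{c_t}\, P_{t-2}(\lambda).
\end{equation*}
The scalar identity from the previous step, divided by $c_t$, reads $1 = \alpha_t \delta_t + \gamma_t\, c_{t-2}/c_t$, so the coefficient of $P_{t-2}(\lambda)$ equals $1 - \alpha_t \delta_t$. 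Identifying $a_t := \alpha_t \delta_t$ and $b_t := \beta_t \delta_t$ then yields the stated residual recurrence $P_t = (a_t + b_t \lambda) P_{t-1} + (1-a_t) P_{t-2}$. For the base case, $\tilde{P}_0$ is constant, so $P_0 \equiv 1$ is residual, and one checks that the recurrence then forces $P_t(0) = 1$ inductively (consistent with the derivation).

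The proof is essentially bookkeeping; the only subtle point is recognizing that the $\lambda = 0$ evaluation is what turns an arbitrary three-term recurrence into a residual one by collapsing the two scaling factors $c_{t-1}/c_t$ and $c_{t-2}/c_t$ into a single parameter $a_t$ via the affine relation $a_t + \gamma_t c_{t-2}/c_t = 1$. I do not anticipate a real obstacle, beyond double-checking the typo in the displayed formula for $\delta_t$ against the version used in Algorithm~\ref{algo: chebyshev}.
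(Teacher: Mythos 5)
Your derivation is correct and is essentially the canonical proof: the paper itself does not reprove this lemma but defers to \citet{pedregosa2020acceleration}, and your computation (evaluate the recurrence at $\lambda=0$, set $\delta_t=\tilde P_{t-1}(0)/\tilde P_t(0)$, and use $1=\alpha_t\delta_t+\gamma_t\tilde P_{t-2}(0)/\tilde P_t(0)$ to get the $1-a_t$ coefficient) is exactly that argument. You are also right about the typo: the statement's $\delta_t=(\alpha_t+\gamma_t\delta_{t-1})$ should be the reciprocal $\delta_t=1/(\alpha_t+\gamma_t\delta_{t-1})$, as used in Algorithm~\ref{algo: chebyshev}.
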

The proof of this is presented in \cite{pedregosa2020acceleration}. Further, we know how to compute the recurrence for the polynomials of a shifted distribution:
\begin{lemma}

Let $(\tp_t)$ be a family polynomials   following  
\begin{equation}    \label{rec}
    \tp_t(\lambda)=(\alpha_t+\beta_t\lambda)\tp_{t-1}(\lambda)+\gamma_t\tp_{t-2}(\lambda) ,
\end{equation}
and define polynomials $P_t$ such that :
$$
P_t(m(\lambda))=\tp_t(\lambda),
$$
with $m(\lambda)=a\lambda+b$ a non-singular affine transform. Then $P_t$ follows a recurrence like in eq. \eqref{rec}, with:
\begin{align}
    \alpha_t'&=\alpha_t+b\beta_t, \\
    \beta'_t&=a\beta_t\,,\\
    \gamma'_t&=\gamma_t\,.
\end{align}
\end{lemma}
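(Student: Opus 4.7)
The plan is to treat this lemma as a purely formal consequence of the affine change of variables: the result should fall out by substituting the defining relation $P_t \circ m = \tp_t$ into the given three-term recurrence and then collecting terms in the new indeterminate. No analytic input (orthogonality, positivity, asymptotics, etc.) is required, so the argument is just polynomial manipulation in a single variable.

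Concretely, I would first use $P_t(m(\lambda)) = \tp_t(\lambda)$ to rewrite the hypothesized recurrence as
$$P_t(m(\lambda)) = (\alpha_t + \beta_t \lambda)\, P_{t-1}(m(\lambda)) + \gamma_t\, P_{t-2}(m(\lambda)).$$
Then I would introduce the new variable $\eta = m(\lambda)$ and, using that $m$ is nonsingular, express $\lambda$ as an affine function of $\eta$. The only place $\lambda$ appears on the right-hand side is inside the scalar prefactor $\alpha_t + \beta_t \lambda$; substituting turns this into another affine function of $\eta$, while $\gamma_t$ is untouched because it multiplies no $\lambda$. Separating the constant and linear parts in $\eta$ yields a recurrence of the form
$$P_t(\eta) = (\alpha_t' + \beta_t' \eta)\, P_{t-1}(\eta) + \gamma_t'\, P_{t-2}(\eta),$$
from which $\alpha_t'$, $\beta_t'$, $\gamma_t'$ can be read off directly.

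The only subtlety, and what I expect to be essentially the sole obstacle, is to pin down the convention under which the stated formulas $\alpha_t' = \alpha_t + b\beta_t$, $\beta_t' = a\beta_t$, $\gamma_t' = \gamma_t$ actually hold. With the substitution $\lambda = a\eta + b$ one computes
$$\alpha_t + \beta_t(a\eta + b) = (\alpha_t + b\beta_t) + (a\beta_t)\eta,$$
which matches the claim exactly. So the natural reading is that $m$ in the statement plays the role of the inverse change of variable (the map sending the new indeterminate $\eta$ to the old indeterminate $\lambda$); nonsingularity ($a\neq 0$) is used only to justify that $\eta$ is a valid new indeterminate. Once this convention is fixed, the proof reduces to the one-line expansion above, and identification of coefficients gives the three formulas simultaneously.
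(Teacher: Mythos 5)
Your proposal is correct and is essentially the paper's own argument: the paper proves the lemma in one line by plugging $m^{-1}(\lambda)$ into the recurrence \eqref{rec}, exactly the change-of-variables substitution you perform. Your observation about the convention is also well taken --- under the literal relation $P_t(m(\lambda))=\tp_t(\lambda)$ the substitution yields $\alpha_t'=\alpha_t - b\beta_t/a$ and $\beta_t'=\beta_t/a$, so the stated formulas indeed correspond to reading $a,b$ as the coefficients of the map from the new variable back to the old one (i.e.\ of $m^{-1}$), a point the paper glosses over but which does not affect the argument.
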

\begin{proof}
The result follows from considering Eq. \eqref{rec} with argument $m^{-1}(\lambda)$.
\label{jacobi recurrence}
\end{proof}

These results are sufficient to obtain the recurrence relation for the residual polynomial w.r.t $x^\beta(L-x)^\alpha$. We begin by the standard Jacobi polynomials, which are orthogonal w.r.t $(1-x)^\alpha(1+x)^\beta$ and follow a recurrence according to $\alpha_t,\beta_t,\gamma_t$ below \cite{szego1975orthogonal}:
\begin{align}
    \alpha_t&=\frac{(2n+\alpha+\beta)(2n+\alpha+\beta-1)}{2n(n+\alpha+\beta)},\\
    \beta_t&=\frac{(\alpha^2-\beta^2)(2n+\alpha+\beta-1)}{2n(n+\alpha+\beta)(2n+\alpha+\beta-2)},\\
    \gamma_t&=\frac{-2(n+\alpha-1)(n+\beta-1)(2n+\alpha+\beta)}{2n(n+\alpha+\beta)(2n+\alpha+\beta-2)}\,.
\end{align}
We then shift the distribution according to $\eta(x)$, and then transform to the residual ones. We slightly simplify these computations and use remark \ref{rmk: momentum based} to get Algorithm \ref{algo: chebyshev}.

We know \citep{szego1975orthogonal} that the Laguerre polynomials $L_t^\alpha$, with usual normalization, follow the recurrence

\begin{equation}
    L_t^\alpha(\lambda)=\left(\frac{2t+\alpha-1}{t}-\frac{1}{t}\lambda\right)L_{t-1}^\alpha(\lambda)+\frac{t+\alpha-1}{t}L_{t-2}^\alpha(\lambda)\,.
\end{equation}
As we don't have to shift the domains, we have only to apply lemma \ref{lemma: to residual} to get the Laguerre method. Further, we can get a explicit expression for $\delta_t=\frac{t}{t+\alpha}$ by simplifying the expression.

\begin{algorithm}
\caption{Laguerre($\alpha$)}
\begin{algorithmic}
\STATE \textbf{Inputs}: Initial vector $\xx_0$, function $f$, distributional parameter $\alpha$ \\
$\xx_{-1}\gets \textbf{0}$\\
\FOR{$t=1,\ldots,T$}
\STATE $\xx_t\gets\xx_{t-1}+\frac{t-1}{t+\alpha}(\xx_{t-1}-\xx_{t-2})-\frac{1}{t+\alpha}\nabla f(\xx_{t-1})$
\ENDFOR
\end{algorithmic}
\end{algorithm}

\section{Proofs of section \ref{section: robust average}}
In the following we will consider shifted versions of the spectral distributions. This shift is the affine function $m(\lambda):[0,L]\rightarrow [-1,1]$ because most results in the theory of orthogonal polynomials are stated in terms of distributions supported in $[-1,1]$.

This can be seen as an additional layer of abstraction because the quantities evaluated with the shifted distributions and polynomials are proportional, i.e. if $P_t(x)=\tp_t(m(x))$ and $\mu'(x)=\tmu'(m(x))$, then,
\begin{equation}
    \int P_t^2(x)\mu'(x)\dif x\propto \int \tp_t^2(x)\tmu'(x)\dif x,
\end{equation}
so all the asymptotics are the same. The Jacobi polynomials $\JP_t$ are those orthogonal w.r.t $\dif\mu(x)=(1-x)^\alpha(1+x)^\beta \dif x$. Most works use the  normalization $\tp_t^{\alpha,\beta}(-1)=(-1)^t\binom{t+\beta}{t}$. We will write $\tp^{\alpha,\beta}_t$ for this normalization and $\JP_t$ for the residual polynomials
\robustjacobi*
\begin{proof}
We will prove that for any $\alpha$ and $\beta$, $\xi,\tau>-1$, $l>0$ and distributions $\nu_{\tau,\xi-l}$, we have
\begin{equation*}
    \int P_t^{\alpha,\beta}(x)^2x^l d\nu_{\tau,\xi-l}(x) \sim  L^lC^{\alpha,\beta}_\nu\left\{
	\begin{array}{ll}
		  t^{-1-2\beta}& \mbox{if } 
		  \alpha<\tau+1/2 \text{ and } \beta <\xi+1/2,\\
		  t^{-2(\xi+1)}\log t& \mbox{if } 
		  \alpha=\tau+1/2 \text{ and } \beta =\xi+1/2,\\
		  t^{2(\max\{\alpha-\beta-\tau,-\xi\}-1)}& \mbox{if } 
		  \alpha>\tau+1/2 \text{ or } \beta >\xi+1/2.
	\end{array}
\right.
\end{equation*}
We will first show this result for the beta weights, then show that distributions with the same concentration behave similarly. \\
The normalization of $\tp^{\alpha,\beta}_t$ is s.t. \cite{szego1975orthogonal} (4.3.3):
\begin{equation}
    \int_{-1}^1\tp_t^{\alpha,\beta}(x)(1-x)^\alpha(1+x)^\beta \dif x=\frac{2^{\alpha+\beta+1}}{2t+\alpha+\beta+1}\frac{\Gamma(t+\alpha+1)\Gamma(t+\beta+1)}{\Gamma(t+1)\Gamma(t+\alpha+\beta+1)}=\Theta(t^{-1})\,.
\end{equation}

Further, the residual polynomials  are s.t. $|P^{\alpha,\beta}_t|=\Theta(t^{-\beta})|\tp^{\alpha,\beta}_t|$, from the definition of the classical normalization.\\
We state the result (Exercise 91, Generalisation of 7.34.1) from \cite{szego1975orthogonal}:
\begin{lemma}[\citet{szego1975orthogonal}] \label{assymp. lemma}
We have,
\begin{align}
    &\int_0^1(1-x)^\tau P_t^{\alpha,\beta}(x)^2dx \sim\Theta( h_{\tau}^\alpha),\\
    &h_{\tau}^\alpha\defas
\left\{
	\begin{array}{ll}
		t^{2(\alpha-\tau-1)}  & \mbox{if } \alpha>\tau+1/2, \\
		t^{-1}\log t   & \mbox{if } \alpha=\tau+1/2, \\
		t^{-1}   & \mbox{if } \alpha<\tau+1/2 \, .
	\end{array}
\right.
\end{align}
 \label{jacobi lemma}
\end{lemma}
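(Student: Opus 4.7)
The plan is to reduce both convergence metrics to a weighted $L^2$-norm of the Jacobi residual polynomial $P_t^{\alpha,\beta}$ and then extract the rate via the Szegő-type asymptotic of Lemma~\ref{jacobi lemma}, first for the Beta special case and then extending to general $\nu_{\tau,\xi}$ by a localization argument. By Theorem~\ref{thm: metrics} both convergence metrics reduce, up to the prefactor $R^2$, to
\begin{equation*}
I_l(t) \;=\; \int_0^L P_t^{\alpha,\beta}(\lambda)^2\, \lambda^l\, d\nu_{\tau,\xi}(\lambda), \quad l\in\{1,2\},
\end{equation*}
so it suffices to prove $I_l(t)\sim L^l\, C^{\alpha,\beta}_{l,\nu}\, t^{e_l}$.

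I would first treat the Beta weight $d\mu_{\tau,\xi}(\lambda)\propto \lambda^\xi(L-\lambda)^\tau d\lambda$. The affine change of variable $\lambda = L(1+x)/2$ sends the residual normalization point $\lambda=0$ to $x=-1$, and rewrites $I_l(t)$, up to $L^{l+\xi+\tau+1}$ and explicit constants, as
\begin{equation*}
|\tilde P_t^{\alpha,\beta}(-1)|^{-2}\int_{-1}^1(1+x)^{\xi+l}(1-x)^\tau\,\tilde P_t^{\alpha,\beta}(x)^2\,dx,
\end{equation*}
where the standard Jacobi normalization gives $|\tilde P_t^{\alpha,\beta}(-1)|\sim c_{\alpha,\beta}\,t^\beta$. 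Splitting at $x=0$: on $[0,1]$ the factor $(1+x)^{\xi+l}$ is bounded above and below, so Lemma~\ref{jacobi lemma} yields a contribution of order $h_\tau^\alpha$; on $[-1,0]$ the reflection identity $\tilde P_t^{\alpha,\beta}(-x) = (-1)^t\tilde P_t^{\beta,\alpha}(x)$ allows the same lemma to be applied with $(\alpha,\tau)$ replaced by $(\beta,\xi+l)$, giving order $h_{\xi+l}^\beta$. The total is $t^{-2\beta}(h_\tau^\alpha + h_{\xi+l}^\beta)$; a short case analysis on which of the two terms dominates reproduces the three-branch formula for $e_l$, the logarithmic factor appearing exactly when both $\alpha=\tau+\tfrac12$ and $\beta=\xi+l+\tfrac12$.

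I would then extend to general $\nu_{\tau,\xi}$ using the edge hypothesis: $d\nu_{\tau,\xi}/d\lambda=\Theta(\lambda^\xi)$ near $0$, $\Theta((L-\lambda)^\tau)$ near $L$, and continuous and strictly positive on $(0,L)$. Fix $\epsilon>0$ and split $I_l(t) = \int_0^\epsilon + \int_\epsilon^{L-\epsilon} + \int_{L-\epsilon}^L$. On the two edge pieces, the density is sandwiched between constants times the corresponding Beta density, so these pieces reduce, as $t\to\infty$ and then $\epsilon\downarrow 0$, to the Beta computation above scaled by the endpoint densities of $\nu_{\tau,\xi}$. For the interior piece I would invoke Lemma~\ref{wk} to obtain a uniform bound on $|P_t^{\alpha,\beta}(\lambda)|$ on compact subsets of $(0,L)$, yielding $\int_\epsilon^{L-\epsilon} = O(t^{-1-2\beta})$. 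This is at most the edge rate $t^{e_l}$ in every case and strictly smaller in Cases~2--3, so it does not affect the exponent.

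The main obstacle is this localization step: in Case~1 the interior bound $O(t^{-1-2\beta})$ matches the edge rate in order of magnitude, so one must verify that it contributes only to the problem-dependent constant $C^{\alpha,\beta}_{l,\nu}$ and not to the exponent. This is ensured because the Szegő asymptotic of Lemma~\ref{jacobi lemma} depends only on the weight's behavior at the endpoint, so removing a compact neighborhood of each endpoint alters only a bounded multiplicative factor; letting $\epsilon\downarrow 0$ after the $t$-limit recovers a coherent asymptotic constant and completes the proof.
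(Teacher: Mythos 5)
You have not proved the statement in question. The statement is the Szeg\H{o}-type asymptotic itself, namely that $\int_0^1(1-x)^\tau P_t^{\alpha,\beta}(x)^2\,dx=\Theta(h_\tau^\alpha)$ with the three regimes $t^{2(\alpha-\tau-1)}$, $t^{-1}\log t$, $t^{-1}$ according to the sign of $\alpha-\tau-\tfrac12$. Your proposal instead sketches the proof of the downstream result (Theorem~\ref{thm: jacobirates}: the GCM average-case rates), and in doing so it explicitly invokes ``the Szeg\H{o}-type asymptotic of Lemma~\ref{jacobi lemma}'' as a black box — both on $[0,1]$ with parameters $(\alpha,\tau)$ and, after reflection, with $(\beta,\xi+l)$. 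As a proof of the lemma this is circular: the quantity whose asymptotics you are asked to establish is assumed at the outset. (For what it is worth, the paper does not prove this lemma either; it is imported verbatim from Szeg\H{o}, Exercise~91, as a generalisation of~7.34.1, so the correct target of your argument is the classical orthogonal-polynomial estimate, not the rate theorem built on top of it.)

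A genuine proof would have to work from the pointwise behaviour of the Jacobi polynomials rather than from the lemma itself. The standard route is the substitution $x=\cos\theta$, under which $(1-x)^\tau\,dx\asymp\theta^{2\tau+1}\,d\theta$ on $\theta\in[0,\pi/2]$, combined with the two-regime bounds of Theorem~7.32.2 (already quoted in the paper's worst-case proposition) and, for matching lower bounds, the Mehler--Heine and Darboux asymptotics: $P_t^{\alpha,\beta}(\cos\theta)^2\asymp t^{2\alpha}$ for $0\le\theta\le c/t$ and $\asymp\theta^{-2\alpha-1}t^{-1}$ on average (the oscillatory factor averaging to a positive constant) for $c/t\le\theta\le\pi/2$. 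The near-endpoint region then contributes $\Theta\bigl(t^{2\alpha}\int_0^{c/t}\theta^{2\tau+1}\,d\theta\bigr)=\Theta\bigl(t^{2(\alpha-\tau-1)}\bigr)$, while the bulk contributes $\Theta\bigl(t^{-1}\int_{c/t}^{\pi/2}\theta^{2(\tau-\alpha)}\,d\theta\bigr)$, which is $\Theta(t^{-1})$ when $\alpha<\tau+\tfrac12$, $\Theta(t^{-1}\log t)$ when $\alpha=\tau+\tfrac12$, and $\Theta\bigl(t^{2(\alpha-\tau-1)}\bigr)$ when $\alpha>\tau+\tfrac12$; summing the two regions yields exactly $h_\tau^\alpha$. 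None of this appears in your proposal, so the entire content of the statement is missing. There is also a normalization point to settle in such a proof: the $P_t^{\alpha,\beta}$ in the lemma must be the classically normalized Jacobi polynomial (the paper's $\tp_t^{\alpha,\beta}$), since the $t^{-2\beta}$ conversion to residual polynomials is applied separately later; your sketch conflates the two notations.
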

Noting that $\tp^{\alpha,\beta}_t(x)=(-1)^\top\tp_t^{\beta,\alpha}(-x)$, we can write:
\begin{equation}
    \int_{-1}^1\tp_t(x)^2(1-x)^\tau(1+x)^\xi dx = \Theta\left(\int_0^1(1-x)^\tau|\tp_t^{\alpha,\beta}(x)|^2dx\right) +\Theta\left(\int_0^1(1-x)^\xi|\tp_t^{\beta,\alpha}(x)|^2dx\right). \label{int decomp}
\end{equation}
We can then show our result for $\dif\nu_{\tau,\xi-l}(x)=x^{\xi-l}(L-x)^\alpha \dif x$ by carefully considering each of the cases on Lemma~\ref{jacobi lemma} and the maximum of each term in eq. \ref{int decomp}, and an added $t^{-2\beta}$ from the different normalization. With this, we have the wanted result for the Beta weights \\
It remains to show:
\begin{equation}
    \int_0^1 \tp_t^{\alpha,\beta}(x)^2\dif\nu_{\tau,\xi}(x)= \Theta\left(\int_0^1(1-x)^\tau\tp_t^{\alpha,\beta}(x)^2dx \right).
\end{equation}
And the rest follows from the same arguments. We do this with the help of this lemma shown in \cite{van1995weak} relating to the weak convergence of the orthogonal polynomials:

\begin{lemma}[\citet{van1995weak}]
 Let $\mu$ be a measure and $(p_t)$ a family of associated orthonormal polynomials such that $p_t$ follow the recurrence:
 \begin{equation*}
     xp_t(x)=a_tp_{t+1}(x)+b_tp_t(x)+a_{t-1}p_{t-1}(x),
 \end{equation*}
 and $a_t,b_t$ converge respectively to $a,b$. Then for any $f$  continuous and bounded we have:
\begin{equation}
    \int f(x)p_t^2(x)\dif\mu(x) \rightarrow \frac{1}{\pi}\int_{-1}^1 \frac{f(x)}{\sqrt{1-x^2}}dx \,.
\end{equation}
\label{wk}
\end{lemma}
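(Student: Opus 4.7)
The plan is to prove this weak convergence result by the moment method. Since $f$ is continuous and bounded, Stone--Weierstrass combined with a tightness argument will reduce matters to verifying convergence of moments: it suffices to show that for every integer $k\ge 0$,
$$\int x^k p_t^2(x)\,\dif\mu(x)\ \longrightarrow\ \frac{1}{\pi}\int_{-1}^1 \frac{x^k}{\sqrt{1-x^2}}\,\dif x.$$
Tightness holds because the essential spectrum of $\mu$, read off from the Jacobi matrix, concentrates in $[-1,1]$ as soon as the recurrence coefficients stabilize to $a=1/2$ and $b=0$, so that the mass of $p_t^2\dif\mu$ outside $[-1-\epsilon,1+\epsilon]$ becomes negligible.

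The moments themselves admit a clean algebraic form. Let $J$ be the semi-infinite symmetric tridiagonal Jacobi operator on $\ell^2(\mathbb{N})$ with $J_{s,s}=b_s$ and $J_{s,s+1}=J_{s+1,s}=a_s$. The three-term recurrence says precisely that multiplication by $x$ on $L^2(\mu)$, when expressed in the orthonormal basis $\{p_s\}_{s\ge 0}$, is represented by $J$. Consequently,
$$\int x^k p_t^2(x)\,\dif\mu(x) \;=\; \langle e_t,\, J^k e_t\rangle \;=\; (J^k)_{t,t},$$
and $(J^k)_{t,t}$ admits the combinatorial expansion as a sum over length-$k$ walks on $\mathbb{N}$ from $t$ to $t$, each step weighted by the appropriate $a_s$ or $b_s$.

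The key observation is now a localization argument: any walk of length $k$ from $t$ to itself stays within $\{t-k,\ldots,t+k\}$, and on that window $a_s\to 1/2$ and $b_s\to 0$. Consequently $(J^k)_{t,t}$ converges, as $t\to\infty$, to the same walk sum with constant weights $a=1/2$, $b=0$, which equals $(J_\infty^k)_{0,0}$ for the bi-infinite constant Jacobi operator $J_\infty$. But $J_\infty$ is diagonalized by the Fourier map $e_n\mapsto \cos(n\theta)$, identifying it with multiplication by $\cos\theta$ on $L^2([0,\pi],\dif\theta/\pi)$. A direct computation then gives
$$\langle e_0,\, J_\infty^k e_0\rangle \;=\; \frac{1}{\pi}\int_0^\pi \cos^k\theta\,\dif\theta,$$
and the substitution $x=\cos\theta$ rewrites this as the arcsine integral on the right-hand side of the lemma, matching all moments.

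The main obstacle is the tightness reduction when $\mu$ has unbounded support. I would handle it using the Jacobi-matrix representation itself: for any integer $m\ge 1$,
$$\int_{|x|>1+\epsilon} p_t^2(x)\,\dif\mu(x) \;\le\; \frac{(J^{2m})_{t,t}}{(1+\epsilon)^{2m}},$$
and the walk expansion yields $(J^{2m})_{t,t}\le (\sup_s 2a_s+|b_s|)^{2m}$, which is uniformly bounded in $t$ because $a_t,b_t$ converge. Choosing $m$ large makes this contribution arbitrarily small for every fixed $\epsilon>0$, validating the polynomial reduction and completing the proof.
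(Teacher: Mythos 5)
The paper does not actually prove this lemma --- it is imported as a black box from \citet{van1995weak} (it is Nevai-type weak asymptotics for measures whose recurrence coefficients converge) --- so your argument is necessarily a different route from the paper: you are supplying a self-contained proof where the paper only cites. Your outline is the standard operator-theoretic moment proof and is essentially sound: the identity $\int x^k p_t^2\,\dif\mu=(J^k)_{t,t}$ is a purely algebraic consequence of iterating the three-term recurrence $k$ times and using orthonormality (no self-adjointness issues arise for fixed $k$); the walk expansion localizes $(J^k)_{t,t}$ to the coefficient window $\{t-k,\dots,t+k\}$, so $(J^k)_{t,t}\to(J_\infty^k)_{0,0}$ for the two-sided constant-coefficient operator, and $(J_\infty^k)_{0,0}=\tfrac1\pi\int_0^\pi\cos^k\theta\,\dif\theta=2^{-k}\binom{k}{k/2}$ (even $k$) are exactly the arcsine moments. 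Two caveats: the two-sided free Jacobi operator is diagonalized by $e_n\mapsto e^{in\theta}$ rather than $e_n\mapsto\cos(n\theta)$ (the latter describes the half-line operator, whose spectral measure at $e_0$ is the semicircle, not the arcsine), though the diagonal entry you actually compute is correct; and the lemma as stated in the paper allows arbitrary limits $a,b$ while asserting the arcsine law on $[-1,1]$ --- your proof (correctly) works under $a=\tfrac12$, $b=0$, which is the case the paper uses; for general limits the identical argument yields the arcsine law on $[b-2a,\,b+2a]$.

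The one step that does not work as written is the tightness bound. You estimate $\int_{|x|>1+\epsilon}p_t^2\,\dif\mu\le (J^{2m})_{t,t}/(1+\epsilon)^{2m}$ and then $(J^{2m})_{t,t}\le\bigl(\sup_s(2a_s+|b_s|)\bigr)^{2m}$; if that supremum exceeds $1+\epsilon$ (early coefficients can be arbitrarily large even though $a_s\to\tfrac12$, $b_s\to0$), letting $m\to\infty$ makes the bound blow up rather than vanish. The repair is immediate from your own localization observation: for fixed $m$, $\limsup_{t}(J^{2m})_{t,t}=(J_\infty^{2m})_{0,0}\le1$, hence $\limsup_t\int_{|x|>1+\epsilon}p_t^2\,\dif\mu\le(1+\epsilon)^{-2m}$, which suffices since only the large-$t$ behaviour matters. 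Alternatively, skip the explicit tightness computation entirely: the second moments $(J^2)_{t,t}$ are bounded in $t$, which gives tightness, and the Fr\'echet--Shohat theorem together with moment-determinacy of the compactly supported arcsine law converts your moment convergence into the stated weak convergence against bounded continuous $f$.
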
 
 Let $\epsilon$ such that
\begin{equation}
    x\geq 1-\epsilon \Rightarrow |\dif\nu_{\tau,\xi}-A(1-x)^\tau|\leq  B(1-x)^\tau \label{eq: epsilon}.
\end{equation}
We observe that for $0<x<1-\epsilon$, $f(x)=\frac{d\nu_{\tau,\xi}}{(1-x)^\alpha(1+x)^\beta}$ is bounded. \\
We get from an application of \ref{wk}, and the observation that $\tp_t^{\alpha,\beta}=\mathcal{N}_tp_t^{\alpha,\beta}$, with constants $\mathcal{N}_t=\Theta(t^{-1/2})$:
\begin{align}
    \underbrace{\int_0^1(1-x)^\tau\tp_t^{\alpha,\beta}(x)^2\dif x}_{\Theta( h_\tau^\alpha)}&=\underbrace{\int_0^{1-\epsilon}(1-x)^\tau\tp_t^{\alpha,\beta}(x)^2\dif x}_{\Theta( t^{-1})
    } +\int_{1-\epsilon}^1(1-x)^\tau\tp_t^{\alpha,\beta}(x)^2\dif x \Rightarrow\\
    &\int_{1-\epsilon}^1(1-x)^\tau\tp_t^{\alpha,\beta}(x)^2\dif x  =\Theta( h_\tau^\alpha),
    \end{align}
\begin{equation}
    \begin{split}
    \int_0^1 \tp_t^{\alpha,\beta}(x)^2\dif\nu_{\tau,\xi}(x)&=
    \underbrace{\int_0^{1-\epsilon} \tp_t^{\alpha,\beta}(x)^2 f(x) (1-x)^\alpha(1+x)^\beta\dif x)}_{\Theta( t^{-1})
    }\\
    &+\Theta\left(
    \underbrace{\int_{1-\epsilon}^1(1-x)^\tau\tp_t^{\alpha,\beta}(x)^2\dif x}_{\Theta( h_\tau^\alpha)}\right).
    \end{split}
\end{equation}

\end{proof}

\jacoptimal*
\begin{proof}
We will prove that for $\tau,\xi>-1$ If $\alpha = \tau$ and $\beta = \xi+l+1$ (i.e., are optimal), the rate of convergence reads
\begin{equation}
    \min_{P_t(0)=1}\int P_t^2(\lambda)\lambda^ld\nu(\lambda)=\Theta\left( \int_{0}^l  \tp_t^{\alpha,\beta}(\lambda)^2(L-\lambda)^\tau\lambda^{\xi+l}\dif \lambda\right) =\Theta( t^{-2(\xi+l+1)}).
\end{equation}
Showing the second equality is easy by considering theorem \ref{thm: jacobirates}, and that is further the minimum asymptotic rate for the Beta distribution $\mu_{\tau,\xi}$. \\
By setting $p_t^\nu$ and $P^\nu_t=\frac{p_t^\nu}{p^\nu_t(0)}$ the optimal orthonormal and residual and  polynomials w.r.t. $\nu$ we show that $P^\nu_t$ must have the same rate on $\mu_{\tau,\xi}$ as it does on $\nu$, thus the optimal rate of $\nu$ cannot be lower than the optimal rate of $\mu_{\tau,\xi}$. Indeed, setting $\epsilon_1,\epsilon_2$ as in eq. \ref{eq: epsilon}:
\begin{align}
    \int_{1-\epsilon_2}^{1}P_t^\nu(x)^2d\nu(x)&=\Theta\left(\int_{1-\epsilon_2}^{1}P_t^\nu(x)^2\dif\mu_{\tau,\xi}(x)\right),\\
    \int_{-1}^{-1+\epsilon_1}P_t^\nu(x)^2d\nu(x)&=\Theta\left(\int_{-1}^{-1+\epsilon_1}P_t^\nu(x)^2\dif\mu_{\tau,\xi}(x)\right),\\
    \int_{-1+\epsilon_1}^{1-\epsilon_2}P_t^\nu(x)^2d\nu(x)&=\Theta\left(\int_{-1+\epsilon_1}^{1-\epsilon_2}P_t^\nu(x)^2\dif\mu_{\tau,\xi}(x)\right)=\Theta\left(\frac{1}{p_t^\nu(-1)^2}\right),
\end{align}
where the first two equations come from the fact that $\nu=\Theta(\mu_{\tau,\xi})$ near $-1$ and $1$ and the third from lemma \ref{wk}.\\
This effectively lower bounds the rates on $\nu$ because the rates of $P_t^\nu$ on $\mu_{\tau,\xi}$ can't be lower than $-2(\xi+l+1)$.
\end{proof}

\worstcase *
\begin{proof}
We will prove that:  $\sup_{x\in[0,L]}x^lP_t^{\alpha,\beta}(x)^2=O(L^lt^{v(\alpha,\beta,l)})$, where:
\begin{equation}
    v(\alpha,\beta,l)=\left\{
    \begin{array}{cc}
           2(\alpha-\beta) &\text{if} \hspace{0.5 cm} \alpha>\beta-l \\
         -1-2\beta \hspace{1 cm} &\text{if} \hspace{0.5 cm} \alpha\leq \beta-l\hspace{0.5 cm} \beta\leq l-\frac{1}{2},\\
         -2l, \hspace{1 cm} &\text{if} \hspace{0.5 cm} \alpha\leq\beta-l\hspace{0.5 cm} \beta\geq l-\frac{1}{2} .
    \end{array}
    \right . 
\end{equation}
From \cite{szego1975orthogonal}, Theorem 7.32.2, if $\theta<\frac{\pi}{2}$:
\begin{equation}
    \tp_t^{\alpha,\beta}(\cos \theta)=\left\{ 
    \begin{array}{cc}
         O(t^{-1/2})  \quad &\text{if} \hspace{0.5 cm} \alpha < -\frac{1}{2}, \\
         O(t^{\alpha})  \quad &\text{if} \hspace{0.5 cm} \alpha \geq -\frac{1}{2} \land 0\leq\theta\leq ct^{-1}, \\
         \theta^{-
         \alpha-1/2}O(t^{-1/2})  \quad&\text{if} \hspace{0.5 cm} \alpha \geq -\frac{1}{2} \land \theta> ct^{-1}.
    \end{array}
    \right .
\label{lemma: worst case}
\end{equation}
We observe that, from the symmetry of the Jacobi polynomials:
\begin{equation}
    \sup_{x\in[0,L]}x^lP_t^{\alpha,\beta}(x)^2 =\Theta\left( \max\left\{\sup_{x\in[0,1]}x \tp_t^{\alpha,\beta}(x)^2,\sup_{x\in[0,1]}(1-x)^l\tp_t^{\beta,\alpha}(x)^2\right\}\right).
\end{equation}
The $(1-x)^l$ term,  corresponds to $(2\sin(\frac{\theta}{2}))^{2l}$ in the variable $\theta$, which is $O(\theta^{2l})$. The rest follows from carefully considering the expressions given by eq. \ref{lemma: worst case}.
\end{proof}

\nesterovrates *

\begin{proof}
We will prove:
\begin{equation}
    \int_0^1P_t^{\text{Nes}}(\lambda)^2\lambda^l\dif\nu_{\tau,\xi-l}\sim C'_\nu
    \Big\{\begin{array}{ll}
          t^{-2(\xi+1)}& \mbox{if } 
		  0<\xi<1/2,  \\
		  t^{-3}\log t& \mbox{if } 
		  \xi=1/2,\\
		  t^{-(\xi+5/2)}& \mbox{if } 
		  \xi>1/2.
	\end{array}
\end{equation}
\cite{paquette2020halting} has shown that the nesterov polynomials $P_t$ are asymptotically, in $t$:
\begin{equation}
    P_t(\lambda)\sim\frac{2J_1(t\sqrt{\alpha\lambda})}{t\sqrt{\alpha\lambda}}e^{-\alpha\lambda t/2},
\end{equation}
in the sense that:
\begin{equation}
    \int_0^1u^{l}\left[\Tilde{P_t^2(u)}-\frac{4J_1^2(t\sqrt{u})}{t^2u}e^{-u t}\right]\dif\mu_{MP}(u)=O(t^{-(l+25/12))}).
\end{equation}
The arguments can be easily used to show that such an integral is $O(t^{ -(\alpha+l+31/12)})$ when evaluated w.r.t. a general $\dif\mu$ s.t $\mu'=\Theta(\lambda^\alpha)$ near $0$. \\
We can thus consider our integral  of interest substituting $P_t^\text{Nes}$ by it's Bessel asymptotic and dividing it into three regions, i.e. $[0,1]=[0,\frac{\epsilon}{t}]\cup[\frac{\epsilon}{t},\frac{\epsilon}{\sqrt{t}}]\cup[\frac{\epsilon}{\sqrt{t}},1]$ corresponding to two different regimes for the Bessel function. The first region will give us the asymptotic and  we will bound the others.\\
We consider first, the "middle region",i .e. for some $\epsilon>0$:
\begin{equation}
    \int_{\frac{\epsilon}{t}}^{\frac{\epsilon}{\sqrt{t}}} u^{\xi}\frac{4J_1^2(t\sqrt{u})}{t^2u}e^{-u t}\dif u.
\end{equation}
We note the asymptotic for $J_1^2$:
\begin{equation}
    J_1^2(\sqrt{tv}) \sim \frac{1}{\pi\sqrt{tv}}(1+\cos(2\sqrt{tv}+2\gamma)).
\end{equation}
Doing the change of variable $v=tu$, and identifying the upper limit of the interval, which is $\epsilon t^{1/2}$, to $\infty$:

\begin{align}
    \int_{\frac{\epsilon}{t}}^{\frac{\epsilon}{\sqrt{t}}} u^{\xi}\frac{4J_1^2(t\sqrt{u})}{t^2u}e^{-u t}\dif u &=\Theta\left(
    t^{-2-\xi}\int_\epsilon^\infty v^{\xi-1}J_1^2(\sqrt{tv})e^{-v}\dif v\right),\\
    &=\Theta\left( t^{-2-\xi}\int_\epsilon^\infty v^{\xi-1}\frac{1}{\pi\sqrt{tv}}e^{-v}\dif v \right),\\
    &=\Theta\left(t^{-\frac{5}{2}-\xi}\underbrace{\int_\epsilon^\infty v^{\xi-\frac{3}{2}}\frac{1}{\pi\sqrt{tv}}e^{-v}\dif v}_{\Gamma(\xi-\frac{1}{2},\epsilon) }\right).
\end{align}
Where, from the Riemann-Lebesgue, the cosine term goes to $0$  lemma and $\Gamma$ is the incomplete Gamma function.\\
The mass from the rightmost region ,i.e, the term corresponding to the interval $[\epsilon t^{-1/2},1]$ is exponentially small. Indeed, because of the exponential $e^{-ut}$ it is  $O(e^{-\epsilon\sqrt{t}})$. \
Lastly, we have for the $[0,\frac{\epsilon}{t}]$ region, doing the change of variables $v=t^2u$:
\begin{equation}
    \int_0^{\frac{\epsilon}{t}} u^{\xi}\frac{4J_1^2(t\sqrt{u})}{t^2u}e^{-u t}\dif u =\Theta\left(
    t^{-2(\xi+1)}\int_0^{t\epsilon} v^{\xi}\frac{J_1^2(\sqrt{v})}{v}e^{-\frac{v}{t}}\dif v\right).
\end{equation}
And the $e^{\frac{-v}{t}}$ term  is $\Theta(1)$. We have the following Bessel asymptotics:
\begin{align}
    \frac{J_1^2(\sqrt{v})}{v}&\sim \frac{1}{4}, \hspace{2 cm} v\rightarrow 0, \\
    \frac{J_1^2(\sqrt{v})}{v}&= O(v^{-3/2}), \hspace{1.0 cm} v\rightarrow \infty,
\end{align}
so we divide this integral aswell:

\begin{align}
    t^{-2(\xi+1)}\int_1^{t\epsilon} v^{\xi}\frac{J_1^2(\sqrt{v})}{v}e^{-\frac{v}{t}}\dif v,
    &=\Theta\left(t^{-2(\xi+1)}\int_\epsilon^{t\epsilon} v^{\xi-\frac{3}{2}}\dif v\right) =\Theta\left( I_\xi(t)t^{-\xi-\frac{5}{2}}\right)\\
    t^{-2(\xi+1)}\int_0^{1} v^{\xi}\frac{J_1^2(\sqrt{v})}{v}e^{-\frac{v}{t}}\dif v
    &=\Theta\left(t^{-2(\xi+1)}\int_0\epsilon^{1} v^{\xi} \dif v\right) =\Theta\left( t^{-2(\xi+1)}\right),
\end{align}
where $I_\xi(t)=\log t$ if $\xi=\frac{1}{2}$ and $1$ otherwise. \\
The Nesterov rate is then $I_\xi(t)t^{-\xi-\frac{5}{2}}$ if $\xi\geq\frac{1}{2}$ and $t^{-2(\xi+1)}$ if $0<\xi<\frac{1}{2}$
\end{proof}
\gdrates*
\begin{proof}
Considering that $P_t^\text{GD}(\lambda)=(1-\frac{\lambda}{L})^\top$ we will prove :
\begin{equation}
    \int_0^1(1-\lambda)^{2t}\lambda^l\dif\nu_{\tau,\xi-l}=\Theta(t^{-(\xi+l+1)}).
\end{equation}
We know, for the Beta weights, that:
\begin{equation}
    \int_0^1(1-\lambda)^{2t+\tau}\lambda^{\xi+l}\dif\lambda=\frac{\Gamma(l+\xi+1)\Gamma(2t+\tau+1)}{\Gamma(2t+l+\xi+\tau+2)}=\Theta(t^{-(\xi+l+1)}). \label{eq: closed form}
\end{equation}
We can identify this asymptotic to the interval $\int_0^\epsilon$ for any $\epsilon$ because:
\begin{equation}
    \int_\epsilon^1(1-\lambda)^{2t+\tau}\lambda^{\xi+l}\dif\lambda=\mathcal{O}((1-\epsilon)^{2t}),
\end{equation}
then:
\begin{align}
    \int_\epsilon^1(1-\lambda)^{2t}\lambda^l\dif\nu_{\tau,\xi-l}&=\mathcal{O}((1-\epsilon)^{2t}), \\
    \int_0^\epsilon(1-\lambda)^{2t}\lambda^l\dif\nu_{\tau,\xi-l}&=\Theta\left(\int_0^\epsilon(1-\lambda)^{2t+\tau}\lambda^{\xi+l}\dif\lambda\right)=\Theta(t^{-(\xi+l+1)}).
\end{align}

\end{proof}
\laguerrerates*
\begin{proof}
Let $L_t^\alpha$ be the Laguerre polynomials with the usual normalization \cite{szego1975orthogonal}:
\begin{equation}
    \int L_t^\alpha(x)^2\dif\mu_\alpha(x)=L_t^\alpha(0)=\binom{n+\alpha}{n} 
\end{equation}
Further [\cite{szego1975orthogonal} (5.1.13)]]:
\begin{equation}
    \sum_{k=0}^T L_t^\alpha(x)=L_t^{\alpha+1}(x).
\end{equation}
Thus, letting $P_t^\alpha$ be the residual Laguerre polynomial, we consider:
\begin{equation}
\begin{split}
    \EE[f(\xx_t)-f(\xx^\star)]&=\int P_t^{\alpha+2}(\lambda)^2\dif\mu_{\alpha+1}(\lambda)=\binom{t+\alpha+2}{t}^{-2}\int L_t^{\alpha+2}\dif\mu_{\alpha+1}(\lambda),\\
    &=\binom{t+\alpha+2}{t}^{-2}\sum_{k=0}^{t}\left[\int L_k^{\alpha+1}(\lambda)\dif\mu_{\alpha+1}(\lambda)\right],\\
    &=\binom{t+\alpha+2}{t}^{-2}\sum_{k=0}^\top\binom{k+\alpha+1}{k}=\binom{t+\alpha+2}{t}^{-2}\binom{t+\alpha+2}{t},\\
    &=\binom{t+\alpha+2}{t}^{-1}=\Theta( t^{-(\alpha+2)}).
\end{split}
\end{equation}
\end{proof}
\section{Additional Experiments} \label{appendix: experiments}

\begin{figure}[H]
    \centering
    
    \includegraphics[width= 0.35 \textwidth]{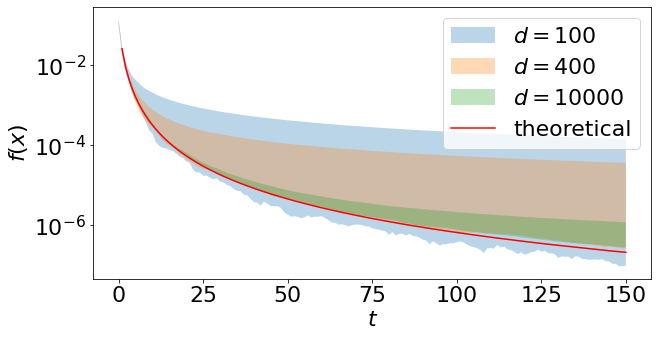}
    \caption{Function value for $\beta<\beta^\star$ for different dimensions of the problem}
    \label{fig:my_label}
\end{figure}
We observed that in the regimes $\beta<\beta^\star$ and $\xi\approx-1$ the objectives  exhibit much higher variance than outside these regimes, where they concentrate tightly around the expected value. Despite this the objectives still converge with increasing dimension.
\end{document}